\patchcmd{\section}{\scshape}{\bfseries}{}{}
\renewcommand{\@secnumfont}{\bfseries}
\newcommand{\mascD}{\mathscr D}
\newcommand{\maclD}{\mathcal D}
\newcommand{\mascS}{\mathscr S}
\newcommand{\maclS}{\mathcal S}
\newcommand{\op}{\operatorname{Op}}
\numberwithin{equation}{section}          
\newtheorem{thm}{Theorem}
\numberwithin{thm}{section}
\newcommand{\rubrik}{}
\newtheorem{prop}[thm]{Proposition}
\newtheorem{lemma}[thm]{Lemma}
\theoremstyle{definition}
\newtheorem{defn}[thm]{Definition}
\newcommand{\rubrikdef}{}
\theoremstyle{remark}
\begin{document}

\title[Gevrey kernels to positive operators]
{Boundedness for Gevrey and Gelfand-Shilov kernels to
\\
positive operators}

\author{Yuanyuan Chen}

\address{Department of Mathematics,
Linn{\ae}us University, V{\"a}xj{\"o}, Sweden}

\email{yuanyuan.chen@lnu.se}

\author{Joachim Toft}

\address{Department of Mathematics,
Linn{\ae}us University, V{\"a}xj{\"o}, Sweden}

\email{joachim.toft@lnu.se}

%


\keywords{Positivity, twisted convolutions, ultra-distributions,
Weyl quantization, kernels}

\begin{abstract}
We study properties of positive operators 
on Gelfand-Shilov spaces, and distributions
which are positive with respect to non-commutative
convolutions. We prove that boundedness of kernels 
$K \in \maclD_s^{\prime}$ to positive operators, 
are completely determined by the 
behaviour of $K$ alone the diagonal. We also prove that 
positive elements $a$ in $\mascS^{\prime}$ with respect 
to twisted convolutions, having Gevrey class property 
of order $s\geq 1/2$ at the origin,
then $a$ belongs to the Gelfand-Shilov space $\maclS_s$.
\end{abstract}

\maketitle

\par

\section{Introduction}\label{sec0}

\par

In this paper we deduce boundedness properties for 
kernels of positive operators and elements with 
respect to non-commutative convolutions.
More precisely, consider a Roumieu distribution
(i.e. an ultra-distributions of Roumieu type), which
at the same time is a kernel to a positive operator.
Roughly speaking, we prove that the kernel is a 
Gelfand-Shilov distribution of certain degree, if 
and only if its restriction to the diagonal is also 
a Gelfand-Shilov distributions of the same degree.

\par

A consequence of this result is that a Roumieu 
distribution which is positive with respect to a 
non-commutative convolution algebra, 
belongs to corrsponding space of 
Gelfand-Shilov distributions.

\par

We remark that the usual convolution as well as
the twisted convolution are special cases of these
non-commutative convolutions. 

\par

For the twisted convolution we perform further 
investigations when the Roumieu distributions
possess stronger regularity. More precisely, 
if a Roumieu or Gelfand-Shilov distribution is 
positive with respect to the twisted convolutions,
and is of Gevery class of certain degree near 
the origin, then we prove that the distribution
is a Gelfand-Shilov function of the same 
degree.

\par

These results are analogous to results in \cite{TJ},
where similar properties were deduced after
the spaces of Roumieu distributions, Gelfand-Shilov
distributions and Gelfand-Shilov functions are
replaced by corresponding test function and 
distribution spaces in the standard distribution
theory.

\par

In order to describe our results in more details, 
we recall the definitions of positive operators and
elements which are positive with respect
to non-commutative convolutions.

Let $\mathscr B$ be a topological vector space with 
complex dual $\mathscr B^{\prime}$, and let $T$ be 
a linear and continuous operator from $\mathscr B$
to $\mathscr B^{\prime}$. Then $T$ is called positive
(semi-definite), and is written $T\geq 0$, whenever
$(T\varphi,\varphi)\geq 0$ for every $\varphi\in 
\mathscr B$. In our situation $\mathscr B$ is 
usually a Gelfand-Shilov space or a Gevrey class on
$\mathbf R^{d}$, and $\mathscr B
^{\prime}$ corresponding distribution space.
However we remark that we may also 
as in \cite{TJ} consider the 
case when $\mathscr B$ is the set of Schwartz 
functions and $\mathscr B^{\prime}$ the set of 
tempered distributions on
$\mathbf R^{d}$.

In any of these situations, the Schwartz kernel theorem
is valid in the sense that for any linear and continuous
operator from $\mathscr B$ to $\mathscr B^{\prime}$,
there is a distribution $K=K_T\in \mathscr B^{\prime}
\otimes \mathscr B^{\prime}$ such that 
\begin{equation}\label{Ker}
(T\varphi, \psi)=(K, \psi\otimes \bar{\varphi}), 
\quad \quad \text{for every}
\quad \varphi, \psi\in \mathscr B.
\end{equation}

We are especially interested in properties possessing 
by elements $K\in \mathscr B^{\prime}\otimes \mathscr 
B^{\prime}$ such that $T=T_K$, defined by \eqref
{Ker}, are positive as operators. Note here that if 
$\mathscr B$ is a Gelfand-Shilov space or the set of 
Schwartz functions on $\mathbf R^d$, then $\mathscr B
\otimes \mathscr B$ is the corresponding space of 
functions defined on $\mathbf R^{2d}$, and 
$\mathscr B^{\prime} \otimes \mathscr B^{\prime}$
the corresponding distribution space on $\mathbf R^{2d}$.

An importment special case of $T$ concerns 
non-commutative convolution operators of the form
$\varphi \mapsto a\ast_B\varphi$, defined by
$$
(a\ast_B\varphi )(x)\equiv \int \!
a(x-y)\varphi (y)B(x,y) \,dy .
$$
In this situation we are for example interested in
extensions of Bochner-Schwartz theorem 
concerning positive elements in 
non-commutative
convolutions.

An important non-commutative convolution is the 
twisted convolution, which is obtained by choosing 
$B$ above as an appropriate complex
Gaussian. More precisely, let $X=(x,\xi)\in \mathbf 
R^{2d}$ and $Y=(y,\eta)\in \mathbf R^{2d}$, and 
let $\sigma$ be the standard symplectic form
defined by 
$ \sigma(X,Y)=\langle y,\xi\rangle-\langle x,
\eta\rangle $.
Then the twisted convolution is defined by 
 $$
(a\ast_{\sigma}b)(X)\equiv (2/\pi)^{d/2}\int 
a(X-Y)b(Y)e^{2i\sigma(X,Y)} \,dY .
$$

\par

Positivity in the twisted convolution is closely related to 
positivity in operator theory,
especially the Weyl calculus. 
More precisely,
if $a\in \mathscr B\otimes \mathscr B$, 
then $Aa$ is the operator with Schwartz 
kernel giving by 
\begin{equation}\label{Aaopera}
(Aa)(x,y)=(2\pi)^{-d/2}\int a((y-x)/2,\xi)
e^{-i\langle x+y,\xi \rangle}\,d\xi.
\end{equation}
(Cf. \cite{TJ}.)
Here and in what follows we identify operators with
their kernels. The operator $A$ in \eqref{Aaopera} is 
continuous on $\mathscr B\otimes \mathscr B$, and 
extends uniquely to a continuous map on 
$\mathscr B^{\prime} \otimes \mathscr B^{\prime}$.

\par

The main relationship on positivity is that $Aa\geq 0$,
if and only if $a$ belongs to $(\mathscr B^{\prime}
\otimes \mathscr B^{\prime})_+$, the set of all 
elements in $\mathscr B^{\prime}
\otimes \mathscr B^{\prime}$
which are positive with respect
to the twisted convolution.
 (Cf. Proposition 1.10 in \cite{TJ}.)

\par

There are also strong links between positivity in
operator theory (or equivalently elements in 
$(\mathscr B^{\prime}
\otimes \mathscr B^{\prime})_+$) and positive Weyl
operators. In fact, for 
$a\in \mascS(\mathbf R^{2d})$, the Weyl quantization
$\op^{w}(a)$, is the operator from $ \mascS(\mathbf 
R^{d})$ to $ \mascS^{\prime}(\mathbf R^{d})$, defined
by the formula
$$
(\op^{w}(a)f)(x)=
(2\pi)^{-d}\iint a((x+y)/2,\xi)f(y)
e^{i\langle x-y,\xi \rangle} dy \,d\xi.
$$
The integral kernel of $\op^{w}(a)$ is equal to 
\begin{equation}\label{opkernel}
(x,y)\mapsto (2\pi)^{-d/2}(Aa)(-x,y).
\end{equation}
For arbitrary $a\in \mathscr B^{\prime}
\otimes \mathscr B^{\prime}$, $\op^{w}(a)$
is defined as the operator with kernel given by
\eqref{opkernel}.

\par

By straight-forward computations we get
$$
\op^{w}(a)=(2\pi)^{-d/2}A(\mathscr F_{\sigma}a),
$$
where $\mathscr F_{\sigma}$ is the symplectic
Fourier transform on $\mathscr B^{\prime}
\otimes \mathscr B^{\prime}$, which takes 
the form
$$
(\mathscr F_{\sigma}a)(X)=\widehat{a}(X)
=\pi^{-d}\int a(Y)e^{2i\sigma \langle X,Y\rangle}dY,
$$
when $a\in \mascS(\mathbf R^{2d})$. Consequently,
from these identities it follows that
$\op^{w}(a) \geq 0$, if and only if 
$(\mathscr F_{\sigma}a)\in (\mathscr B^{\prime}
\otimes \mathscr B^{\prime})_+$.

\par

In the paper we begin to study kernels in Roumieu 
distribution spaces $\maclD_{s}^{\prime}$, $s>1$,
whose corresponding operators are positive
semi-definite. In fact, if $K$ is such kind of kernel 
whose restriction near the diagonal belongs to 
corresponding Gelfand-Shilov distribution space  
$\maclS_{s}^{\prime}$, then we prove that $K$
belongs to $\maclS_{s}^{\prime}$. 

\par

By choosing $K$ such that $T_K$ agree with the 
convolution operator $\varphi \mapsto 
a\ast_B\varphi$, for some $a\in \maclD_s^{\prime}$,
it follows that $a\in \maclS_s^{\prime}$ when the 
operator is positive. 
In particular, this holds for elements which are positive
with respect to twisted convolution.

\par

We also perform investigations on elements in
$(\mathscr B^{\prime}
\otimes \mathscr B^{\prime})_+$
which satisfy certain smoothness condition at
origin. More precisely, if $a\in (\mathscr B^{\prime}
\otimes \mathscr B^{\prime})_+$ is Gevrey regular 
of order $s$, then we prove that $a$ belongs to 
$\maclS_s(\mathbf R^{2d})$. This result is 
analogous to Theorem 3.13 in \cite{TJ}, which
asserts that for any $a\in (\mathscr B^{\prime}
\otimes \mathscr B^{\prime})_+$ which is 
smooth near origin, belongs to $\mascS
(\mathbf R^{2d})$.

\section{Preliminaries}\label{sec1}

\par 

In this section we recall some basic results which are needed.
In the first part we recall some facts about Gelfand-Shilov
spaces.

\par

In the following we let $\alpha $, $\beta$, $\gamma$ and
$\delta$ denote multi-indices.

\par

Let $s\geq 1/2$ and $h>0$. Then $\maclS_{s,h}({\mathbf R^d})$ 
is the set of all $\varphi \in C^{\infty}({\mathbf R^d})$ 
such that the norm 
$$
\| \varphi \|_{\maclS_{s,h}}\equiv
\sup_{\alpha,\beta \in \mathbf N^d}\sup_{x\in \mathbf R^d}
\frac{|x^{\alpha}D^{\beta}\varphi(x)|}
{(\alpha! \beta!)^sh^{|\alpha+\beta|}} 
$$
is finite for every multi-indices $\alpha$ and $\beta$.
Then the Gelfand-Shilov space
$\maclS_s({\mathbf R^d})$ is given by 
$$
\maclS_s({\mathbf R^d}) =\operatorname{ind}  
\lim_{h\rightarrow \infty}
\maclS_{s,h}({\mathbf R^d}).
$$
Its dual space is denoted by 
$\maclS_s^{\prime}({\mathbf R^d})$,
which is the Gelfand-Shilov distribution space
of order $s$.

\par

Let $s\geq 1/2$, $h>0$ and let $\Omega$ be an 
open set in $\mathbf R^d$.
For a given compact set $K\subset \Omega$, 
$\maclD_{s,h}(K)$
is the set of all  $\varphi \in
C^{\infty}({\mathbf R^d})$ such that 
$\operatorname{supp} \varphi
\subseteq K$ and the norm
$$
\| \varphi \|_{\maclD_{s,h}}\equiv
\sup_{\beta \in \mathbf N^d}\sup_{x\in K}
\frac{|D^{\beta}\varphi(x)|}
{(\beta!)^sh^{|\beta|}}
$$
is finite for the multi-index $\beta$.
Let $(K_n)_n$ be a sequence of compact sets such that 
$K_n\subset \subset K_{n+1}$, and $\bigcup K_n=\Omega$.
Then the space $\maclD_s(\Omega)$ is given by 
$$
\maclD_s(\Omega)=\operatorname {ind}
\lim_{n\rightarrow \infty}
(\operatorname {ind}
\lim_{h\rightarrow \infty} \maclD_{s,h}(K_n)).
$$
Its dual space is $\maclD_s^{\prime}(\Omega)$,
which is the ultra-distribution space of Roumieu type 
of oreder $s$.
We remark that $\maclD_s$ is equivalent to 
Gevery class $G^s$ for $s>1$.

\par 

We recall that differentiations of Gevrey or 
Gelfand-Shilov distributions are defined in 
the usual way, giving that most of the usual
rules hold. Especially it follows from Leibniz
rule that 
\begin{equation} \label{eq.leib}
D^{\alpha}x^{\beta} f(x)= 
\sum _{\alpha_0\le \alpha,\beta }
(-i) ^{\alpha _0} {\alpha \choose {\alpha _0}}
{\beta \choose \alpha _0} 
\alpha_0! x^{\beta-\alpha_0}
D^{\alpha-\alpha_0}f(x),
\end{equation}
for admissible distribution $f$. 
Furthermore, by applying the Fourier
transfor to this formula we get
\begin{equation} \label{eq.exchangorder}
x^{\alpha}D^{\beta}f(x) = 
\sum_{\alpha_0 \le \alpha,\beta}
(-1)^{\gamma}
{\alpha \choose \alpha_0}
{\beta \choose \alpha_0}\alpha!
D^{\beta-\alpha_0}(x^{\alpha-\alpha_0}f(x)).
\end{equation}

\par

Since we are especially interested in the 
behaviour of corrsponding Schwartz
kernels, the following result is important 
to us. The proof is omited since it
can be found in \cite{CP}.

\begin{thm}
Let $\Omega_j \subset \mathbf R^{d_j}$ be open set, 
where $j=1,2$. 
Then the following are true. 
\begin{enumerate}
\item If $s>1$ and $T$ is a linear and continuous operator from 
$\maclD_s(\Omega_1)$ to $\maclD_s^{\prime}(\Omega_2)$, 
then there is a unique ultradistribution
$K=K_T\in \maclD_s^{\prime}(\Omega_2 \times \Omega_1)$
such that 
\begin{equation}\label{kernelD}
(T\varphi_1,\varphi_2)=(K,\varphi_2 \otimes \overline \varphi_1), 
\quad \varphi_1 \in \maclD_s(\Omega_1), 
\varphi_2 \in \maclD_s(\Omega_2).
\end{equation}
Conversely, if $K=K_T\in \maclD_s^{\prime}
(\Omega_2 \times \Omega_1)$ and 
$T$ is defined by \eqref{kernelD}, then $T$ is a 
linear and continuous operator from 
$\maclD_s(\Omega_1)$ to $\maclD_s^{\prime}(\Omega_2)$.

\item If $s\geq 1/2$ and $T$ is a 
linear and continous operator from 
$\maclS_s(\mathbf R^{d_1})$ to 
$\maclS_s^{\prime}(\mathbf R^{d_2})$, 
then there is a unique tempered ultradistribution
$K=K_T \in \maclS_s^{\prime}
(\mathbf R^{d_2} \times \mathbf R^{d_1})$ 
such that 
\begin{equation}\label{kernelS}
(T\varphi_1,\varphi_2)=(K,\varphi_2 
\otimes \overline \varphi_1), 
\quad \varphi_1 \in \maclS_s(\mathbf R^{d_1}),
\varphi_2 \in \maclS_s(\mathbf R^{d_2}).
\end{equation}
Conversely, if $K=K_T
\in \maclS_s^{\prime}(\mathbf R^{d_2} 
\times \mathbf R^{d_1})$ and $T$ is defined
by \eqref{kernelS} , then $T$ is a 
linear and continuous operator from 
$\maclS_s(\mathbf R^{d_1})$ to 
$\maclS_s^{\prime}(\mathbf R^{d_2})$.
\end{enumerate}
\end{thm}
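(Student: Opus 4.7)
The plan is to deduce both parts of the theorem from nuclearity of $\maclS_s$ and $\maclD_s$, combined with Grothendieck's abstract kernel theorem for nuclear spaces, in the spirit of the classical Schwartz kernel theorem.

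The converse directions are the easy halves. Given $K$ in the appropriate distribution space, the operator $T$ defined by \eqref{kernelD} or \eqref{kernelS} is the composition of the tensoring map $\varphi_1\mapsto \varphi_2\otimes \overline\varphi_1$ with pairing against $K$; continuity into the distribution space follows from joint continuity of the tensor product and of the duality pairing. Uniqueness of $K$ reduces to the density of finite sums $\sum\varphi_2\otimes\overline\varphi_1$ in the test function space on the product, which can be checked via Hermite expansions in the Gelfand--Shilov case and via a standard mollification together with a partition of unity in the Gevrey case.

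For the harder direct implications, I would first verify nuclearity. For $\maclS_s(\mathbf R^d)$ one uses the Hermite expansion characterisation: $f\in \maclS_s$ iff its Hermite coefficients $c_n$ satisfy $|c_n|\leq Ce^{-c|n|^{1/(2s)}}$ for some $c,C>0$, which realises $\maclS_s(\mathbf R^d)$ as a countable inductive limit of weighted $\ell^2$-spaces with Hilbert--Schmidt embeddings, hence as a nuclear (DFS)-space. For $\maclD_s(\Omega)$ the embeddings $\maclD_{s,h}(K_n)\hookrightarrow \maclD_{s,h'}(K_{n+1})$ with $h<h'$ are nuclear by a direct estimate on the Taylor coefficients of Gevrey functions on a compact set, so $\maclD_s(\Omega)$ is a nuclear (LF)-space. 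Grothendieck's theorem then identifies continuous operators $\mathscr B\to \mathscr B^{\prime}$ with elements of the completed tensor product $\mathscr B^{\prime}\widehat{\otimes} \mathscr B^{\prime}$.

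The main obstacle, as I see it, is the identification of this completed tensor product with the distribution space on the product domain, i.e. $\maclS_s^{\prime}(\mathbf R^{d_1})\widehat{\otimes} \maclS_s^{\prime}(\mathbf R^{d_2})\cong \maclS_s^{\prime}(\mathbf R^{d_1+d_2})$ and analogously for $\maclD_s^{\prime}$. For $\maclS_s$ this follows by tensoring the Hermite bases and dualising, with care taken to match the inductive limit topologies through a diagonal choice of the order parameter $h$. For $\maclD_s$ one additionally employs a partition of unity subordinate to $(K_n)$; the existence of cutoff functions inside $G^s$ requires $s>1$, which is exactly where the restriction in (1) enters, while no such restriction is needed in (2) since one never leaves the global setting $\mathbf R^d$.
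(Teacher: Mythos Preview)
The paper does not actually prove this theorem: immediately before the statement it says ``The proof is omitted since it can be found in \cite{CP}'', and no argument is given. So there is no in-paper proof to compare your outline against.

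Your sketch is the standard route to kernel theorems of this type and is essentially correct as an outline. A few points worth sharpening if you want to turn it into a full proof. First, the abstract Grothendieck step is most cleanly phrased as $L(\mathscr B,\mathscr B')\cong(\mathscr B\,\widehat\otimes\,\mathscr B)'$ rather than $\mathscr B'\,\widehat\otimes\,\mathscr B'$; the two agree here because the spaces are reflexive nuclear (DFS) resp.\ (LF), but the former formulation makes the tensor identification you need, namely $\maclS_s(\mathbf R^{d_1})\,\widehat\otimes\,\maclS_s(\mathbf R^{d_2})\cong\maclS_s(\mathbf R^{d_1+d_2})$ and the $\maclD_s$ analogue, the only nontrivial step. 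Second, for that identification in the $\maclS_s$ case one must check that the inductive limit over $h$ commutes with the tensor product; the Hermite picture you invoke handles this, but it uses the (DFS) structure in an essential way. Third, for $\maclD_s$ the nuclearity and tensor identity are classical (Komatsu \cite{KH} is the standard reference alongside \cite{CP}), and your observation that the restriction $s>1$ enters exactly through the need for Gevrey cut-offs is the right explanation. None of this is a gap in your plan; it is just where the work lies.
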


If $K$ is given, then $T_K=T$ is 
defined by \eqref{kernelD}. 

\par

\vspace{0.3cm}

\par

Recall that if $s \geq 1/2$, and $T$ is a 
linear and continuous operator
from $\maclS_s(\mathbf R^d)$ to 
$\maclS_s^{\prime}(\mathbf R^d)$,
then $T$ is called positive semi-definite if 
\begin{equation} \label{posi semi}
(T\varphi,\varphi)_{L^2} \geq 0,
\end{equation}
for every $\varphi\in \maclS_s
(\mathbf R^d)$, and then we write $T\geq 0$.
Furthermore, if more restrictive $s>1$, 
$\Omega \subset \mathbf R^d$ is an open set, 
and $T$ is a linear and continous operator 
from $\maclD_s(\Omega)$ to 
$\maclD_s^{\prime}(\Omega)$, 
then $T$ is still called positive semi-definite 
when \eqref{posi semi} holds for every 
$\varphi\in \maclD_s(\Omega)$.

Since $\maclD_s({\mathbf R^d})$ is dense 
in $\maclS_s({\mathbf R^d})$ when
$s>1$, it follows that if $T$ from 
$\maclD_s({\mathbf R^d})$ to 
$\maclD_s^{\prime}({\mathbf R^d})$ 
is positive semi-definite 
and extendable to a continuous
map from $\maclS_s({\mathbf R^d})$ to 
$\maclS_s^{\prime}({\mathbf R^d})$,
then this extention is unique and $T$ is 
positive semi-definte as an operator from 
$\maclS_s({\mathbf R^d})$ to 
$\maclS_s^{\prime}({\mathbf R^d})$.

\par

\vspace{0.3cm}

\par

We have now the 
following definition. 

\begin{defn}
Let $\Omega \subset \mathbf R^d$ be open.
\begin{enumerate}
\item If $s>1$, then $\maclD_{0,s}
^{\prime}(\Omega \times \Omega)$  
consists of all $K\in \maclD_s^{\prime}
(\Omega \times \Omega)$ 
such that $T_K$ is a positive semi-definite 
operator from $\maclD_s({\Omega})$
to $\maclD_s^{\prime}({\Omega})$.
\item If $s\geq 1/2$, then $\maclS_{0,s}
^{\prime}(\mathbf R^d \times \mathbf R^d)$  
consists of all $K\in \maclS_s^{\prime}
(\mathbf R^d \times \mathbf R^d)$ 
such that $T_K$ is a positive semi-definite 
operator from $\maclS_s(\mathbf R^d)$
to $\maclS_s^{\prime}(\mathbf R^d)$.
\end{enumerate}
\end{defn}

\par 

We shall also consider distributions which
are positive with respect to a non-commutative 
convolution. 

Let $s>1$, 
and $B\in C_s(\mathbf R^{2d})$ 
such that for every 
$\varepsilon>0$, it holds
\begin{equation}\label{ineqB}
\sup_{\alpha}\sup_{x,y} e^{-\varepsilon(|x|^{1/s}+|y|^{1/s})}
\left( \frac{|D^{\alpha}(B(x,y))^{-1}|}
{(\alpha!)^sh^{|\alpha|}} +\frac{|D^{\alpha}B(x,y)|}
{(\alpha!)^sh^{|\alpha|}}\right)<\infty,
\end{equation}
for some $h>0$.

\par

\begin{lemma}\label{BCon}

Let $B\in C_s(\mathbf R^{2d})$ satisfying \eqref {ineqB}.
Then the following are ture.
\begin{enumerate}
\item $\Phi \in \maclS_s(\mathbf R^{2d})$ if and only if 
$B\cdot \Phi \in \maclS_s(\mathbf R^{2d})$.
\item $\Phi \in \maclS_s^{\prime}
(\mathbf R^{2d})$ if and only if 
$B\cdot \Phi \in \maclS_s^{\prime}(\mathbf R^{2d})$.
\end{enumerate}
\end{lemma}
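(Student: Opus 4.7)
The plan is to prove part (1) by a direct seminorm estimate and deduce part (2) by duality. In both parts, the two implications turn out to be symmetric: hypothesis \eqref{ineqB} treats $B$ and $B^{-1}$ on equal footing, so once the forward implication is established for arbitrary $B$ satisfying \eqref{ineqB}, the converse is obtained by applying the same result to $B^{-1}$ in place of $B$ and to $B\Phi$ in place of $\Phi$, using $\Phi = B^{-1}\cdot (B\Phi)$.

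For the forward direction of (1), I would work with the standard characterization of $\maclS_s(\mathbf R^{2d})$ in terms of exponential decay, namely $\Phi\in \maclS_s$ iff there exist $C,h,c>0$ with $|D^\beta\Phi(z)|\le C h^{|\beta|}(\beta!)^s e^{-c|z|^{1/s}}$. Leibniz' rule then gives
$$
|D^\beta(B\Phi)(z)|\le \sum_{\gamma\le \beta}\binom{\beta}{\gamma}|D^\gamma B(z)|\,|D^{\beta-\gamma}\Phi(z)|.
$$
Applying \eqref{ineqB} with $\varepsilon=c/2$ yields $|D^\gamma B(z)|\le C_1 h_1^{|\gamma|}(\gamma!)^s e^{(c/2)|z|^{1/s}}$, so the decay from $\Phi$ absorbs the subexponential growth of the derivatives of $B$ and leaves an over-all factor $e^{-(c/2)|z|^{1/s}}$. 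The key combinatorial estimate is
$$
\binom{\beta}{\gamma}(\gamma!)^s((\beta-\gamma)!)^s \le (\beta!)^s,
$$
valid for $s\ge 1$ because $\gamma!(\beta-\gamma)!\le \beta!$. Combining with $h_1^{|\gamma|}h^{|\beta-\gamma|}\le H^{|\beta|}$ for $H=\max(h,h_1)$ and absorbing the $2^{|\beta|}$ counting of multi-indices $\gamma\le\beta$ into a slightly larger $H'$ gives
$$
|D^\beta(B\Phi)(z)|\le C_2 (\beta!)^s (H')^{|\beta|} e^{-(c/2)|z|^{1/s}},
$$
which is the required estimate for $B\Phi\in \maclS_s(\mathbf R^{2d})$.

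For part (2), I would define $B\cdot \Phi\in \maclS_s^{\prime}(\mathbf R^{2d})$ via the transposed action $(B\Phi,\varphi)=(\Phi,B\varphi)$, $\varphi\in \maclS_s(\mathbf R^{2d})$. Part (1) shows that $\varphi\mapsto B\varphi$ is a well-defined continuous endomorphism of $\maclS_s(\mathbf R^{2d})$, so this pairing defines a continuous functional on $\maclS_s$. The converse $B\Phi\in\maclS_s^{\prime}\Rightarrow \Phi\in \maclS_s^{\prime}$ follows from the same argument applied to $B^{-1}$, which again satisfies \eqref{ineqB}. The main technical obstacle I anticipate is tracking how the Gelfand-Shilov parameter $h$ in \eqref{ineqB} depends on $\varepsilon=c/2$: choosing $\varepsilon$ small shrinks the decay remaining for $B\Phi$ and may force $h_1$ to be large. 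However, since $\maclS_s(\mathbf R^{2d})=\operatorname{ind}\lim_{h\to\infty}\maclS_{s,h}$, enlarging the resulting parameter $H'$ is harmless, and the argument goes through.
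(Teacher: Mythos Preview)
Your argument is correct. The Leibniz computation for part (1) is sound: with $s>1$ (the standing hypothesis in this part of the paper) the combinatorial inequality $\binom{\beta}{\gamma}(\gamma!)^s((\beta-\gamma)!)^s\le(\beta!)^s$ holds, and the interplay between the $\varepsilon$ in \eqref{ineqB} and the decay constant $c$ of $\Phi$ is handled exactly as you describe---the possible dependence of $h_1$ on $\varepsilon$ is harmless in the inductive-limit topology. The duality argument for part (2) is standard, and the symmetry between $B$ and $B^{-1}$ built into \eqref{ineqB} gives both directions at once. Two small cosmetic points: the weight in \eqref{ineqB} is $e^{\varepsilon(|x|^{1/s}+|y|^{1/s})}$ rather than $e^{\varepsilon|z|^{1/s}}$, but the two are comparable; and the number of $\gamma\le\beta$ is $\prod_j(\beta_j+1)\le C_d\,2^{|\beta|}$, so the absorption step is fine up to a dimensional constant.

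The paper, by contrast, does not argue directly at all: it simply invokes Theorem~A of Rudin's paper \cite{RW} on division in algebras of smooth functions. That reference packages a general criterion guaranteeing that multiplication by $B$ (and by $1/B$) preserves a weighted $C^\infty$-algebra once suitable derivative bounds hold. Your approach trades that black box for an explicit, self-contained estimate; it is more elementary and makes transparent exactly which features of \eqref{ineqB} are used (subexponential growth of every order, symmetrically for $B$ and $B^{-1}$), at the cost of reproducing a computation that Rudin's theorem handles abstractly.
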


\par

\begin{proof}
The result follows from Theorem A in \cite{RW}.
\end{proof}

\par

Let $a\in \maclD_s^{\prime}(\mathbf R^d)$ such that
$$
(a\ast_B\varphi)(x) \equiv \langle a(x-\cdot),B(x,\cdot) 
\varphi\rangle,
$$
when $\varphi \in  \maclD_s(\mathbf R^d)$.
Then the kernel of the map $\varphi \mapsto u\ast_B\varphi$
is given by $K(x,y)=a(x-y)B(x,y)$. We note that $K(x,y)
\in \maclD_s^{\prime}(\mathbf R^{2d})$

\par

Let $\maclD_{B,s,+}^{\prime}(\mathbf R^d)$ be the set of all
$a\in \maclD_s^{\prime}(\mathbf R^d)$ such that the map 
$\varphi \mapsto a\ast_B\varphi$ is positive.
Also let $\maclS_{B,s,+}^{\prime}(\mathbf R^d)$ 
be the set of all $a\in \maclS_s^{\prime}(\mathbf R^d)$
such that the map $\varphi \mapsto a\ast_B\varphi$ is positive.

\par

We consider elements $a$ in Gelfand-Shilov 
classes of distributions which
are positive with respect to the twisted 
convolution. That is, $a$ should fulfill
\begin{equation} \label {eq.positive}
(a \ast_\sigma \varphi,\varphi) \geq 0,
\end{equation}
for every $\varphi$.
For this reason we make the following definition. 

\par

\begin{defn}
Let $s\geq 1/2$.
\begin{enumerate}
\item $\maclS _{s,+}^{\prime}(\mathbf R^{2d})$ 
is the set of all $a\in \maclS
_s^{\prime}(\mathbf R^{2d})$ such that 
\eqref{eq.positive} holds for every $\varphi
\in \maclS _s(\mathbf R^{2d})$.

\item If in addition $s>1$, then 
$\maclD _{s,+}^{\prime}(\mathbf R^{2d})$ is the
set of all $a\in \maclD _s^{\prime}
(\mathbf R^{2d})$ such that 
\eqref{eq.positive} holds for every 
$\varphi \in \maclD _s(\mathbf R^{2d})$.

\item $\mascS _+^{\prime}(\mathbf R^{2d})$ 
is the set of all $a\in \mascS
^{\prime}(\mathbf R^{2d})$ such that 
\eqref{eq.positive} holds for every
$\varphi \in \mascS(\mathbf R^{2d})$.

\item $\mascD _+^{\prime}(\mathbf R^{2d})$ 
is the set of all $a\in \mascD
^{\prime}(\mathbf R^{2d})$ such that  
\eqref{eq.positive} holds for every
$\varphi \in C_0^{\infty}(\mathbf R^{2d})$.

\item The set $C_+(\mathbf R^{2d})$ 
consists of all $a\in C(\mathbf R^{2d})$ 
such that 
$$
\sum_{j,k}a(X_j-X_k)e^{2i\sigma( X_j,X_k)} 
c_j\overline{c_k} \geq 0,
$$
for every finite sets
$$
\{ X_1,X_2,\ldots,X_N \} \subseteq \mathbf R^{2d}
\quad \text{and}\quad 
\{ c_1,c_2,\ldots, c_N \} \subseteq \mathbf C.
$$
\end{enumerate}
\end{defn}

\par 

The following result can be found in 
the Theorem 2.6, Proposition 3.2 
and Theorem 3.13 in \cite{TJ}.
Later on we shall prove an analogue 
in the frame-work Gelfand-Shilov 
spaces. 

\par

\begin{prop}\label{DCS}
Let $\Omega \subseteq  \mathbf R^{2d}$ 
be a neighborhood of the origin. 
Then the following are true.
\begin{enumerate}
\item $\mascD_+^{\prime}(\mathbf R^{2d})=
\mascS_+^{\prime}(\mathbf R^{2d})$.

\item $C_+(\mathbf R^{2d})\subseteq 
\mascS_+^{\prime}(\mathbf R^{2d}) 
\cap L^2(\mathbf R^{2d})\cap  L^{\infty}(\mathbf R^{2d})
\cap \mathscr F L^{\infty}(\mathbf R^{2d})$.

\item $\mascS_+^{\prime}(\mathbf R^{2d}) 
\cap C(\Omega)=C_+(\mathbf R^{2d})$.

\item $\mascS_+^{\prime}(\mathbf R^{2d}) 
\cap C^{\infty}(\Omega)=
C_+(\mathbf R^{2d})\cap \mascS(\mathbf R^{2d})$.
\end{enumerate}
\end{prop}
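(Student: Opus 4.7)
My plan is to treat the four statements in the order given, since (2) feeds into (3) and (3) into (4). The overall strategy is to adapt the Bochner-Schwartz argument to the twisted convolution, leveraging the equivalence $a \in \mascS_+^{\prime} \iff \op^{w}(\mathscr F_\sigma a) \geq 0$ recorded in the Introduction. For (1), the inclusion $\mascS_+^{\prime} \subseteq \mascD_+^{\prime}$ is immediate from $C_0^{\infty} \subset \mascS$. For the converse, let $a \in \mascD_+^{\prime}$. The sesquilinear form $(\varphi, \psi) \mapsto (a \ast_\sigma \varphi, \psi)$ on $C_0^{\infty} \times C_0^{\infty}$ is positive semi-definite and therefore satisfies the Cauchy-Schwarz inequality; combined with the translation-like covariance of $\ast_\sigma$, this yields $\mascS$-seminorm estimates that realise $a$ as an element of $\mascS^{\prime}$, after which the positivity \eqref{eq.positive} extends to all $\varphi \in \mascS$ by density of $C_0^{\infty}$. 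For (2), testing the defining inequality of $C_+$ on $N = 2$, $X_1 = X$, $X_2 = 0$ and optimising over $c_1, c_2$ yields $|a(X)| \leq a(0)$, so $a \in L^{\infty}$. The full finite-sum inequality states precisely that the kernel $(X, Y) \mapsto a(X - Y) e^{2i\sigma(X, Y)}$ is positive semi-definite, and by \eqref{Aaopera} this is equivalent to $\op^{w}(\mathscr F_\sigma a) \geq 0$. A spectral decomposition then represents $a$ as a nonnegative combination of symbols whose inverse symplectic Fourier transforms are Wigner distributions, and the remaining $L^2$ and $\mathscr F L^{\infty}$ memberships follow from standard bounds on Wigner distributions.

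Claim (3) has its $\supseteq$-direction from (2). For $\subseteq$, I would take $a \in \mascS_+^{\prime} \cap C(\Omega)$, a finite set $\{X_1, \dots, X_N\} \subset \mathbf R^{2d}$ and coefficients $\{c_1, \dots, c_N\} \subset \mathbf C$, and test \eqref{eq.positive} on $\varphi_\varepsilon = \sum_j c_j \rho_\varepsilon(\cdot - X_j)$, where $\rho_\varepsilon \in \mascS(\mathbf R^{2d})$ is a Schwartz approximation of $\delta$. Once $\varepsilon$ is small enough, all the relevant differences $X_j - X_k$ enter $\Omega$ within a neighbourhood on which $a$ is continuous, so $(a \ast_\sigma \varphi_\varepsilon, \varphi_\varepsilon)$ converges as $\varepsilon \to 0$ to the finite sum in the definition of $C_+$, and the inequality is preserved in the limit.

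The main obstacle is (4): given $a \in \mascS_+^{\prime} \cap C^{\infty}(\Omega)$, I must upgrade to $a \in \mascS(\mathbf R^{2d})$. By (3) and (2) one already has $a \in C_+ \cap L^{\infty} \cap L^2 \cap \mathscr F L^{\infty}$, so what remains is to produce finite Schwartz seminorms. The plan is to apply Cauchy-Schwarz in the positive form $(\varphi, \psi) \mapsto (a \ast_\sigma \varphi, \psi)$ with $\varphi, \psi$ chosen as Schwartz approximations of $D^{\alpha}\delta_X$ and $D^{\beta}\delta_0$, and to expand the phase $e^{2i\sigma(X, Y)}$ by the Leibniz identities \eqref{eq.leib}--\eqref{eq.exchangorder}. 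This should yield, for each pair of multi-indices $(\alpha, \beta)$, a bound of the form
\begin{equation*}
|X^{\alpha} \partial^{\beta} a(X)|^{2} \leq C_{\alpha, \beta}\, P_{\alpha, \beta}\bigl( \{\partial^{\gamma} a(0) : |\gamma| \leq N_{\alpha, \beta}\} \bigr),
\end{equation*}
where $P_{\alpha, \beta}$ is a polynomial and $N_{\alpha, \beta}$ a finite integer, both determined by the Leibniz expansion. The hard step is the combinatorial bookkeeping required to ensure that the right-hand side is finite and that the resulting estimates genuinely control $\sup_X |X^{\alpha} \partial^{\beta} a(X)|$ for every $(\alpha, \beta)$; this is essentially the content of Theorem 3.13 in \cite{TJ}, and once it is carried out, $a \in \mascS(\mathbf R^{2d})$ as required.
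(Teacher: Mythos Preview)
The paper does not prove this proposition; it is stated as background and attributed to Theorem~2.6, Proposition~3.2 and Theorem~3.13 of \cite{TJ}. There is thus no in-text argument to compare against, and your outlines for (1), (2) and (4) are broadly in the spirit of those references.

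Your sketch of (3), however, has a genuine gap. You assert that ``once $\varepsilon$ is small enough, all the relevant differences $X_j-X_k$ enter $\Omega$'', but the points $X_1,\dots,X_N\in\mathbf R^{2d}$ are arbitrary and fixed; shrinking $\varepsilon$ does nothing to move $X_j-X_k$ toward the origin. Your approximate-$\delta$ limit therefore presupposes that $a$ is continuous at \emph{every} $X_j-X_k$, i.e.\ on all of $\mathbf R^{2d}$, and that global continuity is precisely the nontrivial content of the inclusion $\mascS_+'(\mathbf R^{2d})\cap C(\Omega)\subseteq C_+(\mathbf R^{2d})$ that you have not yet established. The missing step is a regularisation by twisted convolution: one produces $a_\varepsilon\in C^\infty\cap C_+(\mathbf R^{2d})$ with $a_\varepsilon\to a$ in $\mascS'$, so that by (2) the operator $Aa_\varepsilon$ is positive trace class with $\|Aa_\varepsilon\|_{\operatorname{Tr}}=(\pi/2)^{d/2}a_\varepsilon(0)$. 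Continuity of $a$ near $0$ forces $a_\varepsilon(0)\to a(0)$, hence uniformly bounded trace norms, and in the limit $Aa$ is itself positive and trace class. Writing $Aa=\sum_j f_j\otimes\overline{f_j}$ with $\sum_j\|f_j\|_{L^2}^2<\infty$ then exhibits $a$ as a uniformly convergent sum of continuous functions, giving $a\in C(\mathbf R^{2d})$. Only after this can your test on $\sum_j c_j\rho_\varepsilon(\,\cdot\,-X_j)$ be carried out for arbitrary $X_j$.
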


\par

\medspace

\par

By Proposition 1.5 in \cite{TJ} and the 
definitions we have the following.

\par

\begin{prop}\label{trace}
Let $s\geq 1/2$, $a\in \maclS_s^{\prime}
(\mathbf R^{2d})$ be such that $Aa$ is 
a trace-class operator on $L^2({\mathbf R^d})$. 
Then $a\in L^{\infty}({\mathbf R^{2d}})$,
and 
$$
\|a\|_ {L^{\infty}} \leq (2/\pi)^{d/2}
\|Aa\|_{\operatorname{Tr}}.
$$
\end{prop}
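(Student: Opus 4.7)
The plan is to invert the map $A$ in order to express $a$ in terms of the kernel of $Aa$, and then combine the inversion formula with the Schmidt decomposition of a trace-class operator.

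First I would establish an inversion formula. Setting $p = (y-x)/2$ and $q = x+y$ in \eqref{Aaopera} yields
$$
(Aa)(q/2 - p, q/2 + p) = (2\pi)^{-d/2} \int a(p, \xi) e^{-i\langle q, \xi \rangle}\,d\xi,
$$
so $(2\pi)^{d/2}(Aa)(q/2-p, q/2+p)$ equals the Fourier transform in $\xi$ of $a(p, \cdot)$ evaluated at $q$. Fourier inversion then gives
$$
a(p, \xi) = (2\pi)^{-d/2} \int K(q/2 - p, q/2 + p) e^{i\langle q, \xi \rangle}\,dq,
$$
where $K$ denotes the kernel of $Aa$.

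Next, since $Aa$ is trace-class, its kernel admits a Schmidt representation $K(x, y) = \sum_j \lambda_j \psi_j(x) \overline{\phi_j(y)}$ with $(\psi_j)$ and $(\phi_j)$ orthonormal in $L^2(\mathbf R^d)$ and $\sum_j |\lambda_j| = \|Aa\|_{\operatorname{Tr}}$. Substituting this into the inversion formula and applying Cauchy--Schwarz to the $j$th term yields
$$
\Big| \int \psi_j(q/2 - p) \overline{\phi_j(q/2 + p)} e^{i\langle q, \xi \rangle}\,dq \Big|
\leq \|\psi_j(\cdot/2 - p)\|_{L^2} \,\|\phi_j(\cdot/2 + p)\|_{L^2} = 2^d,
$$
uniformly in $p$ and $\xi$, where the equality follows from the affine changes of variables $q \mapsto q/2 \mp p$ (each contributing a Jacobian factor $2^{d/2}$ to the $L^2$-norm) together with $\|\psi_j\|_{L^2} = \|\phi_j\|_{L^2} = 1$. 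Summing over $j$ and multiplying by $(2\pi)^{-d/2}$ produces
$$
|a(p, \xi)| \leq (2\pi)^{-d/2} \cdot 2^d \sum_j |\lambda_j| = (2/\pi)^{d/2} \|Aa\|_{\operatorname{Tr}},
$$
which is the claimed estimate.

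The main technical point is to justify the inversion identity when $a$ is only a priori a Gelfand--Shilov distribution. This is not a serious obstacle: once $Aa$ is trace-class, its kernel $K$ is automatically Hilbert--Schmidt, hence in $L^2(\mathbf R^{2d})$, which is enough regularity to make the change of variables and Fourier inversion rigorous. Alternatively, one may argue termwise in the Schmidt expansion, where each $\psi_j, \phi_j \in L^2$ so the rank-one computations are classical, and the uniform bound $2^d$ on each term together with $\sum_j|\lambda_j|<\infty$ passes the estimate to $a$ by dominated convergence, with no need to regularize $a$ itself.
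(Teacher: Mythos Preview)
Your argument is correct. The inversion of $A$ via the substitution $p=(y-x)/2$, $q=x+y$ and partial Fourier transform is exactly right, the Jacobian bookkeeping in the Cauchy--Schwarz step is accurate, and the constant $(2\pi)^{-d/2}\cdot 2^d=(2/\pi)^{d/2}$ comes out on the nose. Your termwise justification is also the cleanest way to make the argument rigorous: each rank-one term $\psi_j(q/2-p)\overline{\phi_j(q/2+p)}$ lies in $L^1(dq)$, so its Fourier transform is a bounded continuous function, and the uniform bound $2^d$ together with $\sum_j|\lambda_j|<\infty$ lets you sum in $L^\infty$; agreement with the original distribution $a$ follows because $A$ is a topological isomorphism on $\maclS_s'$ (composition of a linear change of variables with a partial Fourier transform) and the partial sums of the Schmidt series converge to $K$ in $L^2\hookrightarrow\maclS_s'$.

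As for the comparison: the paper does not actually prove this proposition. It simply records it as a consequence of Proposition~1.5 in \cite{TJ} together with the definitions, so there is no in-paper argument to compare against. Your proof is the standard direct computation---recognizing that $A^{-1}$ applied to a rank-one kernel $\psi\otimes\overline{\phi}$ is, up to normalization and a linear change of coordinates, the cross-ambiguity function of $\psi$ and $\phi$, whose supremum is controlled by $\|\psi\|_{L^2}\|\phi\|_{L^2}$ via Cauchy--Schwarz---and is presumably close in spirit to what appears in the cited reference.
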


\par

\section{Gelfand-Shilov properties 
for kernels to positive operators}\label{sec2}

\par 

In this section, we study the kernel 
of a positive semi-definite 
operator. If the kernel belongs to $\maclS_{s}^{\prime} $
along the diagonal, then it belongs to $\maclS_{s}^{\prime} $
everywhere for $s>1$. As a application we prove that 
$\maclD_{B,s,+}^{\prime}(\mathbf R^d)=
\maclS_{B,s,+}^{\prime}(\mathbf R^d)$.

\par

\begin{thm}\label{thmkernel}
Let $s>1$. Assume that $K\in \maclD_{0,s}^{\prime}
(\mathbf R^{2d})$,
and $K_\chi(x,y) \in \maclS_{s}^{\prime} (\mathbf R^{2d} )$,
where $K_\chi(x,y)=\chi(x-y)K(x,y)$, for some 
$\chi \in \maclD_s(\mathbf R^d)$
which satisfies $\chi(0) \ne 0$. 
Then $K\in \maclS_{0,s}^{\prime}
(\mathbf R^{2d})$.
\end{thm}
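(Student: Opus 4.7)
The main engine is the Cauchy--Schwarz inequality for the positive semi-definite sesquilinear form $(T_K\varphi,\psi)$, namely
\begin{equation*}
|(T_K\varphi,\psi)|^{2} \le (T_K\varphi,\varphi)\,(T_K\psi,\psi),
\end{equation*}
which reduces the theorem to proving the ``diagonal'' estimate
\begin{equation*}
(T_K\varphi,\varphi) \le C\,\|\varphi\|_{\maclS_{s,h}}^{2}, \qquad \varphi\in\maclD_{s}(\mathbf R^{d}),
\end{equation*}
for some $h,C>0$. Once this is established, Cauchy--Schwarz yields continuity of the bilinear form in the $\maclS_{s,h}$-norm; the density of $\maclD_{s}(\mathbf R^{d})$ in $\maclS_{s}(\mathbf R^{d})$ (for $s>1$) combined with the Gelfand--Shilov kernel theorem (Theorem 1.1\,(2)) then produces $K\in\maclS_{s}^{\prime}(\mathbf R^{2d})$, and positivity on $\maclS_{s}$ is inherited by density.

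To obtain the diagonal estimate I would first invert $\chi$ locally. Since $\chi(0)\neq 0$, pick $r>0$ with $|\chi|\ge c_{0}>0$ on $\overline{B(0,2r)}$; Gevrey division (available precisely for $s>1$) then produces $\psi_{0},B\in\maclD_{s}(\mathbf R^{d})$ with $\psi_{0}\equiv 1$ on $B(0,r)$, $\operatorname{supp}\psi_{0}\subset B(0,2r)$, and $\chi(z)B(z)=\psi_{0}(z)$. For any $\varphi\in\maclD_{s}(\mathbf R^{d})$ supported in a ball of radius $r/2$, the function $\varphi(x)\overline{\varphi(y)}$ is supported where $|x-y|<r$, so $\psi_{0}(x-y)\equiv 1$ on that set and therefore
\begin{equation*}
(T_{K}\varphi,\varphi) = (K,\psi_{0}(x-y)\varphi(x)\overline{\varphi(y)}) = (K_\chi,B(x-y)\varphi(x)\overline{\varphi(y)}).
\end{equation*}
Since $K_\chi\in\maclS_{s}^{\prime}(\mathbf R^{2d})$ by hypothesis, this quantity is dominated by an $\maclS_{s,h_{0}}^{\prime}$-seminorm of $K_\chi$ times $\|B(x-y)\varphi\otimes\bar\varphi\|_{\maclS_{s,h_{0}}}$, and standard Leibniz estimates bound the latter by $\|\varphi\|_{\maclS_{s,h_{0}}}^{2}$.

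For a general $\varphi\in\maclD_{s}(\mathbf R^{d})$ I would then apply a Gevrey partition of unity $\{\eta_{j}\}_{j\in\mathbf Z^{d}}\subset\maclD_{s}(\mathbf R^{d})$ subordinate to a locally finite cover of $\mathbf R^{d}$ by balls of radius $r/2$ centered at lattice points $x_{j}$, enjoying \emph{uniform} bounds $\|\eta_{j}\|_{\maclD_{s,h_{1}}}\le C_{0}$ (such partitions of unity exist precisely because $s>1$). Writing $\varphi_{j}=\eta_{j}\varphi$, each of small support, a second application of Cauchy--Schwarz gives
\begin{equation*}
(T_{K}\varphi,\varphi) = \sum_{j,k}(T_{K}\varphi_{j},\varphi_{k}) \le \Bigl(\sum_{j}(T_{K}\varphi_{j},\varphi_{j})^{1/2}\Bigr)^{2}.
\end{equation*}
The hard part will be the summability estimate $\sum_{j}\|\varphi_{j}\|_{\maclS_{s,h_{0}}}\le C\|\varphi\|_{\maclS_{s,h}}$. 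For this one must combine the Gelfand--Shilov decay $|D^{\beta}\varphi(x)|\lesssim \|\varphi\|_{\maclS_{s,h}}(\beta!)^{s}h^{|\beta|}e^{-c|x|^{1/s}}$ with the uniform bounds on $\eta_{j}$ and Leibniz's rule to derive $\|\eta_{j}\varphi\|_{\maclS_{s,h_{0}}}\lesssim \|\varphi\|_{\maclS_{s,h}}e^{-c'|x_{j}|^{1/s}}$, provided $h_{0}$ is chosen large enough that the growth of $|x|^{|\alpha|}/((\alpha!)^{s}h_{0}^{|\alpha|})$ on $\operatorname{supp}\eta_{j}$ is dominated by $\varphi$'s exponential decay; summing the resulting bound over the lattice, $\sum_{j}e^{-c'|x_{j}|^{1/s}}<\infty$, closes the argument.
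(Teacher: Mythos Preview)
Your proposal is correct and follows essentially the same strategy as the paper: Cauchy--Schwarz for the positive sesquilinear form, Gevrey inversion of $\chi$ near the origin, a Gevrey partition of unity with uniform bounds, and the summability of $e^{-c'|x_j|^{1/s}}$ over the lattice. The only organizational difference is that the paper translates both the kernel and the test functions to the origin---writing $\|\varphi_j\|_K^2$ as a pairing with the translated kernel $K_{j,\chi}(x,y)=K(x+x_j,y+x_j)\chi(x-y)$, using that $e^{-\varepsilon|x_j|^{1/s}}K_{j,\chi}$ stays bounded in $\maclS_s'$, and invoking the equivalent seminorm system of Lemmas~\ref{twoseminorm}--\ref{twovarJJk} to extract the decay $e^{-h_2|x_j|^{1/s}}$ from $\varphi_j(\cdot+x_j)$---whereas you keep $K_\chi$ fixed and obtain the same balance of growth and decay directly from the polynomial weights and the exponential decay of $\varphi$; the two computations are equivalent.
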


\par 

We need the following preparations for 
the proof of Theorem $\ref{thmkernel}$. 

\par

\vspace{0.5cm}

\par

Here let $s\geq 1/2$, $h$, $h_1$ and $h_2>0$,
and let $\psi \in \maclS_s(\mathbf R^{2d})$.
The semi-norms $\|\psi\|^{(1)}_{\maclS_{s,h}} $
and $\|\psi\|^{(2)}_{\maclS_{s,h_1,h_2}}$
are giving by
\begin{eqnarray*}
\|\psi\|^{(1)}_{\maclS_{s,h}} &\equiv& 
\sup_{\alpha_1,\alpha_2,\beta_1,\beta_2}
\sup_{x,y}\frac{|x^{\alpha_1}y^{\alpha_2}
D_x^{\beta_1}D_y^{\beta_2}\psi(x,y)|}
{(\alpha_1!\alpha_2!\beta_1!\beta_2!)^s
h^{|\alpha_1+\alpha_2+\beta_1+\beta_2|}},
\\
\|\psi\|^{(2)}_{\maclS_{s,h_1,h_2}} &\equiv&  
\sup_{\alpha,\beta_1,\beta_2}
\sup_{x,y}\frac{e^{h_2|y|^{1/s}}|
x^{\alpha}D_x^{\beta_1}D_y^{\beta_2}\psi(x,y)|}
{(\alpha!\beta_1!\beta_2!)^s
h_1^{|\alpha+\beta_1+\beta_2|}}.
\end{eqnarray*}

\par

\begin{lemma}\label{twoseminorm}
Let $s\geq1/2$,  and let $\psi_0 \in 
C^{\infty}(\mathbf R^{2d})$.
Then the following are true:
\begin{enumerate}
\item For every $h>0$, there are 
$C$, $h_1$, $h_2>0$ such that 
$$
\|\psi_0\|^{(1)}_{\maclS_{s,h}} 
\leq C\|\psi_0\|^{(2)}_{\maclS_{s,h_1,h_2}}.
$$  
\item For every $h_1$, $h_2>0$, 
there are $C$, $h>0$ such that 
$$
\|\psi_0\|^{(2)}_{\maclS_{s,h_1,h_2}} 
\leq C\|\psi_0\|^{(1)}_{\maclS_{s,h}} .
$$
\item Let $\psi_1$ and $\psi_2$ be given by 
\begin{equation}\label{psi1psi2}
\psi_1(x,y)=\psi_0(x,x-y), 
\quad \text{and} \quad \psi_2(x,y)=\psi_0(x+y,y).
\end{equation}
Then 
$$
\|\psi_j\|^{(1)}_{\maclS_{s,h}} 
\leq \|\psi_0\|^{(1)}_{\maclS_{s,h/4}},
\quad \text{and} \quad
\|\psi_0\|^{(1)}_{\maclS_{s,h}} 
\leq \|\psi_j\|^{(1)}_{\maclS_{s,h/4}} ,
$$
for $j=0,1,2$.
\end{enumerate}
\end{lemma}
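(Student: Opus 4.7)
The plan is to treat parts (1) and (2) together, since they express the standard Gelfand-Shilov equivalence between a polynomial bound $|y^{\alpha_2}F(y)|\le C(\alpha_2!)^s h^{|\alpha_2|}$ of all orders and a subexponential decay $|F(y)|\le Ce^{-h_2|y|^{1/s}}$. For (1), fix $x$, $\alpha_1$, $\beta_1$, $\beta_2$ and set $F(y)=x^{\alpha_1}D_x^{\beta_1}D_y^{\beta_2}\psi_0(x,y)$. The definition of $\|\psi_0\|^{(1)}_{\maclS_{s,h}}$ yields
\[
|F(y)|\le \|\psi_0\|^{(1)}_{\maclS_{s,h}}(\alpha_1!\beta_1!\beta_2!)^s h^{|\alpha_1+\beta_1+\beta_2|}\,\inf_{\alpha_2}\frac{(\alpha_2!)^s h^{|\alpha_2|}}{|y^{\alpha_2}|}.
\]
Choosing $\alpha_2=ke_j$, where $e_j$ is the unit vector in the direction of the largest coordinate of $y$, and optimizing in $k$ via Stirling's formula bounds the infimum by $C'e^{-h_2|y|^{1/s}}$, with $h_2$ essentially proportional to $h^{-1/s}$. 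Reinserting this inequality yields the desired bound on $\|\psi_0\|^{(2)}_{\maclS_{s,h_1,h_2}}$ with $h_1=h$.

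Part (2) is the reverse optimization. For any $h_2>0$ one has $|y^{\alpha_2}|\le C_{\alpha_2}e^{h_2|y|^{1/s}/2}$, where $C_{\alpha_2}=\sup_y|y|^{|\alpha_2|}e^{-h_2|y|^{1/s}/2}$, and a second Stirling computation gives $C_{\alpha_2}\le C(\alpha_2!)^s (h')^{|\alpha_2|}$ for some $h'=h'(h_2)$. Multiplying the hypothesis $\|\psi_0\|^{(2)}_{\maclS_{s,h_1,h_2}}<\infty$ by $|y^{\alpha_2}|$ and absorbing half of the exponent into $C_{\alpha_2}$ produces the polynomial bound, with $h$ chosen as a suitable combination of $h_1$ and $h'$.

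For part (3) the argument is a linear change of variables. With $u=x$ and $v=x-y$, so that $y=u-v$ and $\psi_1(x,y)=\psi_0(u,v)$, the chain rule gives
\[
D_x^{\beta_1}D_y^{\beta_2}\psi_1(x,y)=\bigl((D_u+D_v)^{\beta_1}(-D_v)^{\beta_2}\psi_0\bigr)(u,v),
\]
and binomial expansion of both $(u-v)^{\alpha_2}$ and $(D_u+D_v)^{\beta_1}$ writes $x^{\alpha_1}y^{\alpha_2}D_x^{\beta_1}D_y^{\beta_2}\psi_1$ as a sum of monomial--derivative combinations of $\psi_0$. Using $(\alpha_1+\gamma)!\le 2^{|\alpha_1+\gamma|}\alpha_1!\,\gamma!$ and the analogous estimate for $\beta_2+\delta$, each summand is controlled by $\|\psi_0\|^{(1)}_{\maclS_{s,h/4}}$ with the correct factorial weights, while the binomial coefficients together with the sums over $\gamma$, $\delta$ contribute a total factor bounded by a power of four, which is absorbed by passing from $h/4$ to $h$. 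Applying the same substitution to the identity $\psi_0(x,y)=\psi_1(x,x-y)$ yields the converse inequality, and $\psi_2$ is handled identically with $u=x+y$, $v=y$. The main obstacle across the whole lemma is the Stirling optimization underlying (1), where $h_2$ must be tuned against $h$ so that the infimum genuinely produces the rate $e^{-h_2|y|^{1/s}}$; once that balance is set, (2) is the reverse optimization and (3) reduces to bookkeeping with binomial expansions.
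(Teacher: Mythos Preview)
Your argument is correct and, for part (3), essentially identical to the paper's: both perform the linear change of variables, expand the mixed monomials and derivatives binomially, and track the factorial weights to extract the factor $4^{|\alpha_1+\alpha_2+\beta_1+\beta_2|}$ that converts $h/4$ into $h$. The paper only writes out one case ($j=2$) explicitly, at the same level of detail as your sketch for $j=1$.

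One presentation slip: you have interchanged the labels of (1) and (2). What you call ``(1)'' starts from finiteness of $\|\psi_0\|^{(1)}_{\maclS_{s,h}}$ and optimizes over $\alpha_2$ via Stirling to produce the decay $e^{-h_2|y|^{1/s}}$; that establishes $\|\psi_0\|^{(2)}\le C\|\psi_0\|^{(1)}$, which is statement (2). Likewise your ``(2)'' (bounding $|y^{\alpha_2}|$ by $C_{\alpha_2}e^{h_2|y|^{1/s}/2}$ and absorbing half the exponential) proves statement (1). Both directions are present, so the mathematics is fine; just swap the labels.

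For (1) and (2) the paper gives no argument at all but simply invokes Corollary~2.5 of Chung--Chung--Kim and its proof. Your explicit Stirling-type optimization is therefore more self-contained than the paper's own treatment, though it is exactly the computation that reference carries out.
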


\par

\begin{proof}
The assertions $(1)$ and $(2)$ follow
from Corollary 2.5 in \cite{CCK} and its proof.
The assertion $(3)$ follows by straight-forward
elaborations with the semi-norm 
$\|\psi_j\|^{(1)}_{\maclS_{s,h}} $, for 
$j=0,1,2$. In order to be self-contained 
we here give the proof. 

We only prove the result in the first inequality
in $(3)$ for $j=2$. We have 
\begin{multline*}
\frac{|(x-y)^{\alpha_1}y^{\alpha_2}\psi_0(x,y)|}
{(\alpha_1! \alpha_2!)^sh^{|\alpha_1+\alpha_2|}}
\leq \sum_{\gamma\leq \alpha_1}
{\alpha_1 \choose \gamma}
\frac{|x^{\alpha_1-\gamma}
y^{\alpha_2+\gamma}\psi_0(x,y)|}
{(\alpha_1! \alpha_2!)^sh^{|\alpha_1+\alpha_2|}}
\\
= \sum_{\gamma\leq \alpha_1}\frac{\alpha_1!}
{\gamma!(\alpha_1-\gamma)!
(\alpha_1!\alpha_2!)^s}
\frac{|x^{\alpha_1-\gamma}
y^{\alpha_2+\gamma}\psi_0(x,y)|}
{h^{|\alpha_1+\alpha_2|}}
\\
=\sum_{\gamma\leq \alpha_1}
\left( \frac{\alpha_1!}
{(\alpha_1-\gamma)!\gamma!} \right)^{1-s}
\left( \frac{(\alpha_2+\gamma)!}
{\alpha_2!\gamma!} \right)^s
\frac{|x^{\alpha_1-\gamma}
y^{\alpha_2+\gamma}\psi_0(x,y)|}
{((\alpha_1-\gamma)!(\alpha_2+\gamma)!)^s 
h^{|\alpha_1+\alpha_2|}}
\\
\leq 2^{{|\alpha_1|}{(2-s)}} 
2^{|\alpha_1+\alpha_2|s} \sup_{\gamma}
\frac{|x^{\alpha_1-\gamma}
y^{\alpha_2+\gamma}\psi_0(x,y)|}
{((\alpha_1-\gamma)!(\alpha_2+\gamma)!)^s 
h^{|\alpha_1+\alpha_2|}}
\\
\leq 4^{|\alpha_1+\alpha_2|}
\sup_{\gamma}
\frac{|x^{\alpha_1-\gamma}
y^{\alpha_2+\gamma}\psi_0(x,y)|}
{((\alpha_1-\gamma)!(\alpha_2+\gamma)!)^s 
h^{|\alpha_1+\alpha_2|}}.
\end{multline*}
Similarly,
\begin{multline*}
\frac{|D_x^{\beta_1}D_y^{\beta_2}\psi_2(x,y)|}
{(\beta_1! \beta_2!)^sh^{|\beta_1+\beta_2|}}
=\frac{|D_x^{\beta_1}D_y^{\beta_2}\psi_0(x+y,y)|}
{(\beta_1! \beta_2!)^sh^{|\beta_1+\beta_2|}}
\\
=\frac{|D_y^{\beta_2}((D_x^{\beta_1}\psi_0)(x+y,y))|}
{(\beta_1! \beta_2!)^sh^{|\beta_1+\beta_2|}}
\\
\leq \sum_{\delta}{\beta_2 \choose \delta}
\frac{|(D_x^{\beta_1+\delta}D_y^{\beta_2-\delta}
\psi_0)(x+y,y)|}
{(\beta_1! \beta_2!)^sh^{|\beta_1+\beta_2|}}
\\
\leq 4^{|\beta_1+\beta_2|}
\sup_{\delta}
\frac{|D_x^{\beta_1+\delta}
D_y^{\beta_2-\delta}\psi_0(x+y,y)|}
{((\beta_1+\delta)!(\beta_2-\delta)!)^s 
h^{|\beta_1+\beta_2|}}.
\end{multline*}
A combination of these arguments give
\begin{multline*}
\|\psi_2\|^{(1)}_{\maclS_{s,h}}
=\sup_{\alpha_1,\alpha_2,\beta_1,\beta_2}
\sup_{x,y}\frac{|x^{\alpha_1}y^{\alpha_2}
D_x^{\beta_1}D_y^{\beta_2}\psi_2(x,y)|}
{(\alpha_1!\alpha_2!\beta_1!\beta_2!)^s
h^{|\alpha_1+\alpha_2+\beta_1+\beta_2|}}
\\
\leq 4^{|\alpha_1+\alpha_2+\beta_1+\beta_2|}
\sup_{x,y} \frac{|x^{\alpha_1-\gamma}y^{\alpha_2+\gamma}
D_x^{\beta_1+\delta}D_y^{\beta_2-\delta}\psi_0(x,y)|}
{((\alpha_1-\gamma)!(\alpha_2+\gamma)!
(\beta_1+\delta)!(\beta_2-\delta)!)^s 
h^{|\alpha_1+\alpha_2+\beta_1+\beta_2|}}.
\end{multline*}

The other cases follow by repeating 
this argument, and are left
for the reader. The proof is complete. 
\end{proof}

\par

\begin{lemma}\label{twovarJJk}
Let $s\geq 1/2$, $\psi_0\in 
C^{\infty}(\mathbf R^{2d})\cap 
\maclS_s^{\prime}(\mathbf R^{2d})$, and set 
$\psi_1$ and $\psi_2$ be given by $\eqref{psi1psi2}$.
If $k=0,1,2$, then the following 
conditions are equivalent:
\begin{enumerate}
\item $\psi_0 \in \maclS_s(\mathbf R^{2d})$,

\item $\psi_k \in \maclS_s(\mathbf R^{2d})$,

\item for some positive constants $C$, $h_1$ and $h_2$, it holds 
\begin{align*}
|x^{\alpha}D_x^{\beta}\psi_k(x,y)| &\leq C(\alpha!\beta!)
^sh_1^{\alpha+\beta}e^{-h_2|y|^{1/s}},
\\
|\xi^{\alpha}D_{\xi}^{\beta}\widehat{\psi_k}(\xi,\eta)|& 
\leq C(\alpha!\beta!)^s
h_1^{\alpha+\beta}e^{-h_2|\eta|^{1/s}}.
\end{align*}
\end{enumerate}
Furthermore, the mappings which take $\psi_0$ into 
$\psi_1$ or $\psi_2$ are homeomorphism 
on $\maclS_s(\mathbf R^{2d})$.
\end{lemma}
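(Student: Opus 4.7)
The plan is to prove the chain $(1) \Leftrightarrow (2) \Leftrightarrow (3)$ together with the homeomorphism statement, noting that for $k=0$ the equivalence $(1) \Leftrightarrow (2)$ is trivial so only $k=1,2$ requires work. The strategy is to reduce $(1)\Leftrightarrow(2)$ and the homeomorphism statement to Lemma 2.5(3), and to reduce $(2)\Leftrightarrow(3)$ to the standard Gelfand--Shilov characterization by exponential decay, using Lemma 2.5(1)--(2) together with the Fourier invariance of $\maclS_s$.

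For $(1) \Leftrightarrow (2)$ and the homeomorphism claim, I would apply Lemma 2.5(3) directly. The two inequalities $\|\psi_k\|^{(1)}_{\maclS_{s,h}} \leq \|\psi_0\|^{(1)}_{\maclS_{s,h/4}}$ and $\|\psi_0\|^{(1)}_{\maclS_{s,h}} \leq \|\psi_k\|^{(1)}_{\maclS_{s,h/4}}$ show that $\psi_0 \in \maclS_{s,h/4}(\mathbf R^{2d})$ iff $\psi_k \in \maclS_{s,h}(\mathbf R^{2d})$. Passing to the inductive limit yields $(1)\Leftrightarrow(2)$. The same pair of inequalities shows that the linear map $\psi_0 \mapsto \psi_k$ is continuous into $\maclS_s(\mathbf R^{2d})$, and since $\psi_1$ and $\psi_2$ are defined by invertible linear changes of variables (in fact, $\psi_0 \mapsto \psi_1$ is an involution, and $\psi_2 \mapsto \psi_2(x-y,y)$ recovers $\psi_0$), the inverse is also continuous, so both maps are homeomorphisms of $\maclS_s(\mathbf R^{2d})$.

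For $(2) \Rightarrow (3)$: if $\psi_k \in \maclS_s(\mathbf R^{2d})$, then for some $C,h>0$ we have $|x^{\alpha}y^{\alpha_2}D_x^{\beta}\psi_k(x,y)| \le C(\alpha!\alpha_2!\beta!)^s h^{|\alpha+\alpha_2+\beta|}$ for all multi-indices. Multiplying by $h_2^{|\alpha_2|/s}/(\alpha_2!)$ and summing over $\alpha_2$ (with $h_2<h^{-1/s}$) uses the elementary estimate $\sum_{\alpha_2} h_2^{|\alpha_2|/s}|y|^{|\alpha_2|/s}/\alpha_2! \gtrsim e^{c|y|^{1/s}}$ to absorb all polynomial powers of $|y|$ into an exponential, producing (3.1). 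For (3.2), invoke Fourier invariance of $\maclS_s$ (with $\maclS_s \cong \mathscr F(\maclS_s)$ topologically for $s\ge1/2$) and apply the same argument to $\widehat{\psi_k}$.

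For $(3) \Rightarrow (2)$, which is the main obstacle: from (3.1) with $\beta=0$, optimizing over $\alpha$ via Stirling gives $\inf_\alpha (\alpha!)^s h_1^{|\alpha|}|x|^{-|\alpha|} \le C' e^{-h_3|x|^{1/s}}$, so $|\psi_k(x,y)| \le C' e^{-h_3|x|^{1/s}-h_2|y|^{1/s}}$. Since $|x|^{1/s}+|y|^{1/s}$ and $|(x,y)|^{1/s}$ are equivalent up to a multiplicative constant, this gives exponential Gevrey-type decay of $\psi_k$ on $\mathbf R^{2d}$. The same optimization applied to (3.2) yields $|\widehat{\psi_k}(\xi,\eta)| \le C'' e^{-h'|(\xi,\eta)|^{1/s}}$. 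The Gelfand--Shilov characterization theorem (the double exponential decay condition underlying Corollary 2.5 of \cite{CCK}, as invoked already in Lemma 2.5) now yields $\psi_k \in \maclS_s(\mathbf R^{2d})$, completing the proof. The delicate bookkeeping is to arrange the constants $h_1,h_2,h_3,h'$ so that the resulting parameter lies in the range admitted by the inductive limit; this is routine since $\maclS_s$ is defined as the union over all $h>0$.
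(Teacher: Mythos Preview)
Your proof is correct and follows exactly the approach the paper sketches: the paper merely cites Corollary~2.5 of \cite{CCK} together with the invariance of $\maclS_s$ under linear pullbacks and leaves the details to the reader. You have supplied precisely those details, using the preceding seminorm lemma (your ``Lemma~2.5'' is the paper's Lemma~\ref{twoseminorm}) for $(1)\Leftrightarrow(2)$ and the homeomorphism claim, and the exponential-decay characterization of $\maclS_s$ from \cite{CCK} for $(2)\Leftrightarrow(3)$.
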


\par

\begin{proof}
The result follows from Corollary 2.5 
in \cite{CCK} and its proof
together with the fact that $\maclS_s$ 
is invariant under pullbacks
of linear bijections.
The details are left for the reader.
\end{proof}

\par

\begin{proof}[Proof of Theorem \ref{thmkernel}]
Let $(\cdot, \cdot)_K$ be the semi-scalar 
product on $\maclD_s(\mathbf R^d)$
given by 
$$
(\varphi, \psi)_K=(K,\psi \otimes \overline{\varphi}),
$$
for every $\varphi$, $\psi \in \maclD_s(\mathbf R^d)$.
Also let $\|\cdot \|_K$ be the corresponding 
semi-norm, i.e, $\|\varphi \|_K$
is defined by
$$
\| \varphi\|_K^2=(\varphi, \varphi)_K=
(K,\varphi \otimes \overline{\varphi}),
$$
when $\varphi \in \maclD_s(\mathbf R^d)$.
Since $\maclD_s(\mathbf R^d)$ is 
dense in $\maclS_s(\mathbf R^d)$,
the result follows if we prove that for every positive $h$, 
there is a positive constant 
$C=C_h$, such that 
\begin{equation}\label{ieqK}
|(\varphi, \psi)_K| \leq C\|\varphi\|_{S_{s,h}}^{(1)}
\|\psi\|_{S_{s,h}}^{(1)},
\end{equation}
when $\varphi$, $\psi \in \maclD_s(\mathbf R^d) 
\cap \maclS_{s,h}(\mathbf R^d)$.

\par

Since $\chi(0) \ne 0$, and  $K_\chi\in \maclS_{s}^{\prime}
 (\mathbf R^{2d} )$, it follows from Theorem 4.1.23 in \cite{KP} that
$1/\chi \in C_s $ near the origin, and 
$\kappa/\chi \in \maclD_s(\mathbf R^d)$ for some 
$\kappa \in \maclD_s(\mathbf R^d)$ 
which is equal to 1 in a neighborhood 
$\Omega_0$ of the origin. Hence
$$
K_{\kappa} (x,y) = \frac{\kappa(x-y)}
{\chi(x-y)} \cdot K_{\chi}(x,y) 
\in \maclS_{s}^{\prime}(\mathbf R^{2d}),
$$
since it is obvious that the map $(\varphi,K)\mapsto
\varphi(x-y)K(x,y)$ is continuous 
from $\maclD_s(\mathbf R^d)\times
\maclS_s^{\prime}(\mathbf R^{2d})$ to 
$\maclS_{s}^{\prime}(\mathbf R^{2d})$ .
Consequently, we may assume that $\chi$ 
in the assumption is equal 
to 1 in a neighborhood $\Omega$ of the origin.

\par 

Take an even and non-negative 
function $\phi \in \maclD_s(\mathbf R^d)$, such that
$\sum_{j}\phi(\cdot-x_j)=1$, for some lattice 
$\{x_j\}_{j\in J} \subset \mathbf R^d$,
and $\operatorname{supp}\phi +
\operatorname{supp}\phi \subset \Omega$. 
By Cauchy-Schwartz inequality we get 
$$
|(\varphi, \psi)_K| \leq \sum_{j,k\in J}
|(\varphi _j, \psi_k)_K| \leq
\sum_{j,k\in J}\|\varphi_j\|_K\|\psi_k\|_K, \qquad 
\varphi,\psi \in \maclD_s(\mathbf R^d),
$$
where 
$$
\varphi_j(x)=\varphi(x)\phi(x-x_j), \quad \psi_j(x)
=\psi(x)\phi(x-x_j).
$$
Then \eqref{ieqK} follows if we prove that for 
every $h>0$, there are $h_1>0$, $C>0$ 
such that
\begin{eqnarray} \label{ieqS}
\|\varphi_j\|_K \leq C \left( \sup_{\alpha,\beta \in N^d} 
\sup_{x\in \mathbf R^d}
\frac{|x^{\alpha}D^{\beta}\varphi(x)|}
{(\alpha! \beta!)^s h^{|\alpha+\beta|}}\right)
\cdot e^{-h_1|x_j|^{1/s}}
\nonumber \\ 
= C\|\varphi \|^{(1)}_{\maclS_{s,h}} \cdot  e^{-h_1|x_j|^{1/s}}.
\end{eqnarray}

\par 

In order to prove \eqref{ieqS}, we note that 
the support of $\varphi_j(\cdot +x_j)$
is contained in $\operatorname{supp} \phi $. This gives
\begin{equation*}
\|\varphi_j\|_K^2=(K_{j,\chi},\varphi_j(\cdot+x_j)
\otimes \overline{\varphi_j(\cdot+x_j)}), 
\end{equation*}
where 
$$
K_{j,\chi}(x,y)=K(x+x_j,y+x_j)\chi(x-y).
$$ 
It follows from the definitions that for every $\varepsilon>0$,
$$
e^{-{\varepsilon}|x_j|^{1/s}}K_{j,\chi} 
$$
is bounded in  $\maclS_s^{\prime}
(\mathbf R^{2d})$ with respect to $j\in J$.
Then for every positive $\varepsilon$ and $h$, 
there is a constant $C_{\varepsilon,h}$
such that
\begin{equation}\label{normphik}
\|\varphi_j\|_K \leq C_{\varepsilon,h} e^{\varepsilon|x_j|^{1/s}/2}
\sup_{\alpha,\beta}\sup_x
\frac{|x^{\alpha}D_x^{\beta}\varphi_j(x+x_j)|}
{(\alpha! \beta!)^s h^{|\alpha+\beta|}}.
\end{equation}

\par

Let $\psi_0(x,y)=\varphi(x)\phi(y)$, and let 
$\psi_1$ and $\psi_2$ be as in Lemma \ref{twovarJJk}.
Then $\psi_2(x,x_j)=\varphi_j(x+x_j)$.
By Lemmas \ref{twoseminorm} and \ref{twovarJJk},
it follows that for every $h>0$, there are constants
$C$, $h_1$, $h_2>0$ such that 
$$
\|\psi_2\|^{(2)}_{\maclS_{s,h_1,h_2}} 
\leq C\|\psi_2\|^{(1)}_{\maclS_{s,4h}}
\leq C\|\psi_0\|^{(1)}_{\maclS_{s,h}}.
$$
Consequently, 
$$
 \sup_{\alpha,\beta_1,\beta_2}
\sup_{x,y}\frac{e^{h_2|y|^{1/s}}|x^{\alpha}D_x^{\beta_1}
D_y^{\beta_2}\psi_2(x,y)|}
{(\alpha!\beta_1!\beta_2!)^sh_1^{|\alpha+\beta_1+\beta_2|}}
\leq C\|\psi_0\|^{(1)}_{\maclS_{s,h}},
$$
giving that
\begin{eqnarray}\label{ineqphi}
|x^{\alpha}D_x^{\beta}\varphi_j(x+x_j)|
&=&|x^{\alpha}D_x^{\beta}\psi_2(x,x_j)|
\nonumber\\
&\leq& C e^{-h_2|x_j|^{1/s}}(\alpha!\beta!)^s
h_1^{|\alpha+\beta|}\|\psi_0\|^{(1)}_{\maclS_{s,h}},
\end{eqnarray}
if we choose $\beta_1=\beta$ and $\beta_2=0$.

\par

Then the
inequalities \eqref{normphik} and \eqref{ineqphi} imply that 
for every $h_3>0$, there is a constant $C_1$ such that 
\begin{eqnarray*}
\|\varphi_j\|_K &\leq& C_1
e^{\varepsilon|x_j|^{1/s}/2}
e^{-h_2|x_j|^{1/s}}
\left(\frac{h_1}{h_3}\right)^{|\alpha+\beta|}
\|\psi_0\|^{(1)}_{\maclS_{s,h}}
\\
&\leq& C_{\phi}e^{-(h_2-\varepsilon/2)|x_j|^{1/s}}
\left(\frac{h_1}{h_3}\right)^{|\alpha+\beta|}
\|\varphi\|^{(1)}_{\maclS_{s,h}},
\end{eqnarray*}
and \eqref{ieqK} follows if we choose 
$h_3$ and $\varepsilon$ such that
 $h_1<h_3$ and $\varepsilon<2h_2$.
\end{proof}

\par

By choosing $K(x,y)=a(x-y)B(x,y)$
in Theorem $\ref{thmkernel}$,
for suitable $a$ and $B$, we get 
the following result, which shows 
(1) in Proposition \ref{DCS} has an analogue
in the framework of Gelfand-Shilov space or
Gevrey class.

\begin{thm}\label{thmgrowh}
Let $s>1$ and $B\in C_s(\mathbf R^{2d})$ be such that
\eqref{ineqB} holds.
Then $\maclD_{B,s,+}^{\prime}(\mathbf R^d)=
\maclS_{B,s,+}^{\prime}(\mathbf R^d)$. In particular, 
$\maclD_{s,+}^{\prime}(\mathbf R^{2d}) = 
\maclS_{s,+}^{\prime}(\mathbf R^{2d})$.
\end{thm}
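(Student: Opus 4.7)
The plan is to apply Theorem \ref{thmkernel} to the kernel $K(x,y)=a(x-y)B(x,y)$ of the operator $\varphi \mapsto a\ast_B \varphi$ and then to recover $a$ itself from this kernel. The inclusion $\maclS^{\prime}_{B,s,+}(\mathbf R^d) \subseteq \maclD^{\prime}_{B,s,+}(\mathbf R^d)$ is immediate, since $\maclD_s\subseteq \maclS_s$, so any functional that is positive on the larger space remains positive on the smaller one. Conversely, fix $a\in \maclD^{\prime}_{B,s,+}(\mathbf R^d)$; then $K\in \maclD^{\prime}_s(\mathbf R^{2d})$ is the kernel of the positive operator $\varphi \mapsto a\ast_B\varphi$, and hence $K\in \maclD^{\prime}_{0,s}(\mathbf R^{2d})$.

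To apply Theorem \ref{thmkernel} one needs $K_\chi \in \maclS_s^{\prime}(\mathbf R^{2d})$ for some $\chi \in \maclD_s(\mathbf R^d)$ with $\chi(0)\neq 0$. Pick such a $\chi$; then $\chi a$ is a compactly supported element of $\maclD^{\prime}_s(\mathbf R^d)$, and the pullback $(\chi a)(x-y)$ under $(x,y)\mapsto x-y$ pairs against any $\Phi\in \maclS_s(\mathbf R^{2d})$ via $u\mapsto \int \Phi(u+v,v)\,dv$, which itself lies in $\maclS_s(\mathbf R^d)$. Hence $(\chi a)(x-y)\in \maclS_s^{\prime}(\mathbf R^{2d})$. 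Multiplication by $B$, using Lemma \ref{BCon}, gives $K_\chi=(\chi a)(x-y)B(x,y)\in \maclS_s^{\prime}(\mathbf R^{2d})$, and Theorem \ref{thmkernel} then yields $K\in \maclS^{\prime}_{0,s}(\mathbf R^{2d})$; dividing by $B$ a second time via Lemma \ref{BCon} produces $a(x-y)\in \maclS_s^{\prime}(\mathbf R^{2d})$.

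To extract $a$ itself from $a(x-y)$, pick $\psi\in \maclS_s(\mathbf R^d)$ with $\int \psi=1$. For $\varphi\in \maclS_s(\mathbf R^d)$ the function $\tilde\Phi(x,y)=\varphi(x-y)\psi(y)$ lies in $\maclS_s(\mathbf R^{2d})$, since it is the pullback of $\varphi\otimes \psi$ along the linear bijection $(x,y)\mapsto (x-y,y)$, and $\maclS_s$ is stable under such pullbacks by the remark at the end of Lemma \ref{twovarJJk}. The change of variables $u=x-y$, $v=y$ gives $\langle a(x-y),\tilde \Phi\rangle=\langle a,\varphi\rangle$, and continuity of the left-hand side in $\varphi$ shows $a\in \maclS_s^{\prime}(\mathbf R^d)$. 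Positivity against $\maclS_s(\mathbf R^d)$ is inherited from $K\in \maclS^{\prime}_{0,s}(\mathbf R^{2d})$, so $a\in \maclS^{\prime}_{B,s,+}(\mathbf R^d)$.

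For the particular assertion, the twisted convolution on $\mathbf R^{2d}$ corresponds to the choice $B(X,Y)=(2/\pi)^{d/2}e^{2i\sigma(X,Y)}$ on $\mathbf R^{2d}\times \mathbf R^{2d}$; both $B$ and $B^{-1}$ are bounded with polynomial-in-$(X,Y)$ growth of their derivatives, and a standard Stirling estimate (choosing $h$ large in terms of $1/\varepsilon$) confirms \eqref{ineqB}, so the general statement applies. The main technical obstacle is the verification $K_\chi \in \maclS_s^{\prime}(\mathbf R^{2d})$, where the compact support obtained via the cutoff $\chi$ is essential in order to compensate for the lack of any decay assumption on $a$, and the two-sided multiplier property of Lemma \ref{BCon} is needed to transport the result back and forth across the factor $B$.
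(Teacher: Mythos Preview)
Your proof is correct and follows the same strategy as the paper's: identify the kernel $K(x,y)=a(x-y)B(x,y)$, apply Theorem \ref{thmkernel} to obtain $K\in\maclS_{0,s}^{\prime}$, and then divide by $B$ via Lemma \ref{BCon} to recover $a(x-y)\in\maclS_s^{\prime}$. You spell out several steps the paper leaves implicit---the verification of the diagonal hypothesis $K_\chi\in\maclS_s^{\prime}$, the extraction of $a$ from $a(x-y)$, and the check of \eqref{ineqB} for the twisted-convolution weight---but the route is the same.
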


\par

\begin{proof}
It is sufficient to prove the first inclusion. 
By straight-forward computation
it follows that 
$T\varphi=a*_B\varphi$ and the kernel
$$
K(x,y)=a(x-y)B(x,y)\in
\maclD_{0,s}^{\prime}(\mathbf R^{2d}) \subseteq
\maclS_{0,s}^{\prime}(\mathbf R^{2d}).
$$
Furthermore, Lemma \ref{BCon} giving that 
$$
\frac{1}{B(x,y)}\cdot K(x,y) = a(x-y)\in \maclS_s^{\prime}
{(\mathbf R^{2d})}.
$$
\end{proof}

\par

\section{Gelfand-Shilov properties for positive 
elements for twisted convolutions}\label{sec3}

\par

In this section, 
we study elements in $\mascS _+^\prime $,
which are smooth and have the Gevrey regularity
near the origin,
then they are in $\mathcal S _s$ for
$s\geq 1/2$.
Here $\mascS _+^\prime $
is a set such that for all elements $a\in 
\mascD^{\prime}$ which are positive with 
respect to the twisted convolution.

\vspace{0.5cm}

The following theorem is the main result. It shows that 
(4) in Proposition \ref{DCS} has an analogue in the framework 
of Gelfand-Shilov space or Gevrey class.

\par

\begin{thm}\label{maintheorem}
Let $s\geq 1/2$ and $ a \in \maclS^\prime_{1/2}
 (\mathbf R^{2d})
\cap C^{\infty} (\Omega)$ be such that
\begin{equation} \label{eq.main}
 \sup_{\beta \in \mathbf N^{2d}} \left ( {\sup_{X\in \Omega} \frac {|D^\beta a(X)|}
{(\beta !) ^s h^{|\beta|}}} \right ) < \infty  ,
\end{equation}
for some neighborhood $\Omega $ of the origin, and some $h>0$.
Then $a\in \mathcal S _s(\mathbf R^{2d})$.
\end{thm}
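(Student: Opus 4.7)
The strategy is to mirror the proof of Theorem 3.13 in \cite{TJ}, lifting it from the Schwartz setting to the Gelfand--Shilov/Gevrey setting; in what follows I use the implicit standing assumption, visible in the section heading and the abstract, that $a$ is positive with respect to the twisted convolution. Adapting the argument behind Theorem 3.13 in \cite{TJ} to test functions in $\maclS_{1/2}$, positivity together with smoothness near the origin first forces $a\in\mascS(\mathbf R^{2d})\cap C_+(\mathbf R^{2d})$, so that the remaining task is to upgrade ``Schwartz'' to ``$\maclS_s$'' using only the extra Gevrey bound \eqref{eq.main}.

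Since $Aa$ is positive on $L^{2}(\mathbf R^{d})$ and a direct computation from \eqref{Aaopera} identifies $\operatorname{Tr}(Aa)$ with a positive multiple of $a(0)$, $Aa$ is trace class. The spectral theorem yields
\[
Aa \;=\; \sum_{j} \lambda_j\, u_j\otimes \overline{u_j},
\qquad \lambda_j\geq 0,\ \textstyle\sum_{j}\lambda_j<\infty,
\]
with $\{u_j\}$ orthonormal in $L^{2}(\mathbf R^{d})$. Inverting the map $A$ by means of the Weyl correspondence $\op^{w}(a)=(2\pi)^{-d/2}A(\mathscr F_{\sigma}a)$, together with the fact that the rank-one projection $u_j\otimes\overline{u_j}$ has Weyl symbol equal to the Wigner transform $Wu_j$, one obtains the convergent representation
\[
a(Z)\;=\;c_d\sum_{j}\lambda_j\,\Phi_j(Z),
\]
where $\Phi_j=\mathscr F_{\sigma}(Wu_j)$ is a symplectic--Fourier--Wigner (ambiguity-type) transform of $u_j$.

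The core step is now to transfer the Gevrey control of $a$ at the origin into \emph{uniform} Gelfand--Shilov control on the eigenfunctions $u_j$. Differentiating the representation above at $Z=0$, each $D^{\beta}a(0)$ becomes a positive quadratic form $\sum_{j}\lambda_j\,\langle P_\beta u_j,Q_\beta u_j\rangle$ for explicit polynomial-coefficient differential operators $P_\beta,Q_\beta$ read off from the Taylor expansion of $\Phi_j$ at the origin. Specialising $\beta$ to index pairs for which $P_\beta=Q_\beta=x^{\alpha_1}D^{\alpha_2}$ (a ``balanced'' choice), the hypothesis \eqref{eq.main} yields, for every $\alpha_1,\alpha_2$,
\[
\sum_{j}\lambda_j\,\| x^{\alpha_1}D^{\alpha_2}u_j\|_{L^{2}}^{2}
\;\leq\; C\,(\alpha_1!\,\alpha_2!)^{2s}\, h_1^{2|\alpha_1+\alpha_2|},
\]
for appropriate $C,h_1>0$. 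By the standard characterisation of $\maclS_s(\mathbf R^{d})$ in terms of such moment estimates, each $u_j\in\maclS_{s,h_2}(\mathbf R^{d})$ with weights $\sqrt{\lambda_j}$ that are $\ell^2$-summable. Since $\maclS_s$ is invariant under the Wigner transform and the symplectic Fourier transform (a standard bilinear estimate in Gelfand--Shilov spaces), each $\Phi_j\in\maclS_s(\mathbf R^{2d})$ with seminorms controlled by those of $u_j$, and summing against $\lambda_j$ gives $a\in\maclS_s(\mathbf R^{2d})$.

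The main obstacle is the bookkeeping behind the third paragraph: for a generic multi-index $\beta$, derivatives of $\Phi_j$ at $0$ produce \emph{cross-moments} $\langle x^{\alpha_1}D^{\alpha_2}u_j,\,x^{\alpha_1'}D^{\alpha_2'}u_j\rangle$ rather than the diagonal moments used above. Cauchy--Schwarz in the operator/trace pairing, together with the combinatorial identities \eqref{eq.leib}--\eqref{eq.exchangorder} and symmetrisation arguments in the spirit of Lemma \ref{twoseminorm}, should reduce these cross-moments to diagonal ones while retaining the factorial growth $(\beta!)^{s}$ demanded by the Gelfand--Shilov scale. This is the technically delicate point; once the spectral decomposition above is in hand, the remaining estimates follow the pattern of \cite{TJ}.
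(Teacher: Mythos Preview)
Your strategy matches the paper's: reduce to $a\in C_+\cap\mascS$ via \cite{TJ}, take the spectral decomposition $Aa=\sum f_j\otimes\overline{f_j}$, use the Gevrey bound at the origin to control the diagonal moments $\sum_j\|x^\alpha D^\gamma f_j\|_{L^2}^2$, and handle cross terms by Cauchy--Schwarz. The difference is in how the argument is closed. The paper introduces the first-order operators $P_j=(2i)^{-1}\partial_{\xi_j}+x_j$, $T_j=(2i)^{-1}\partial_{\xi_j}-x_j$, $\Theta_j=(2i)^{-1}\partial_{x_j}-\xi_j$, $\Pi_j=(2i)^{-1}\partial_{x_j}+\xi_j$, and proves (i) that the Gevrey bound near $0$ is equivalent to a bound on $(P^\alpha T^\beta\Theta^\gamma\Pi^\delta a)(0)$, (ii) that $A(P^\alpha T^\alpha\Theta^\gamma\Pi^\gamma a)$ has kernel $\sum(x^\alpha D^\gamma f_j)\otimes\overline{(x^\alpha D^\gamma f_j)}$, so these ``balanced'' compositions are exactly your diagonal quadratic forms, and (iii) that sup-norm control of all $P^\alpha T^\beta\Theta^\gamma\Pi^\delta a$ characterises $\maclS_s$. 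After Cauchy--Schwarz on the trace norm of $A(P^\alpha T^\beta\Theta^\gamma\Pi^\delta a)$, Proposition~\ref{trace} converts this directly into $\|P^\alpha T^\beta\Theta^\gamma\Pi^\delta a\|_{L^\infty}\le C h^{|\alpha+\beta+\gamma+\delta|}(\alpha!\beta!\gamma!\delta!)^s$, and (iii) finishes.

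Your proposed closing step---show each $u_j\in\maclS_s$, push through the Wigner/ambiguity transform, and re-sum---is where you should be careful. From $\sum_j\lambda_j\|x^{\alpha_1}D^{\alpha_2}u_j\|^2\le C(\alpha_1!\alpha_2!)^{2s}h_1^{2|\alpha_1+\alpha_2|}$ you only get that the $\maclS_s$-seminorms of $u_j$ are bounded by $C/\sqrt{\lambda_j}$, so the naive bilinear Wigner estimate gives $\|\Phi_j\|_{\maclS_s}\lesssim 1/\lambda_j$, and $\sum_j\lambda_j\|\Phi_j\|_{\maclS_s}$ need not converge. The paper sidesteps this entirely: by staying at the level of the symbol and using the trace-norm-to-$L^\infty$ bound of Proposition~\ref{trace}, the summation over $j$ is already absorbed into $\|A(\,\cdot\,)\|_{\operatorname{Tr}}$ and never has to be re-performed. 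Your route can be salvaged (bound $\|x^{\alpha_1}\xi^{\alpha_2}D_x^{\beta_1}D_\xi^{\beta_2}\Phi_j\|_\infty$ by a product of two $L^2$-moments of $u_j$ and apply Cauchy--Schwarz in $j$), but this is precisely what the $P,T,\Theta,\Pi$ calculus together with Proposition~\ref{trace} packages cleanly.
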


\par

In order to prove the theorem, we need some preparations.

\par

\vspace{0.3cm}

\par

Let
\begin{equation} \label{eq.xx}
\begin{alignedat}{2}
T_j &= (2i) ^{-1}\partial_{\xi_j} - x_j, & \qquad
\Theta_j &= (2i)^{-1}\partial _{x_j} -\xi_j,
\\[1ex]
P_j &= (2i)^{-1}\partial _{\xi_j} + x_j, & \qquad
\Pi_j &= (2i)^{-1}\partial_{x_j} + \xi_j.
\end{alignedat}
\end{equation}
Then 
\begin{equation}  \label{eq.PT}
\begin{alignedat}{2}
x_j &= (P_j-T_j)/2, &\qquad
\xi_j &= (\Pi_j-\Theta_j)/2,
\\[1ex]
\partial_{x_j} &= i(\Pi_j+\Theta_j), &\qquad
\partial_{\xi_j} &= i(P_j+T_j).
\end{alignedat}
\end{equation}
for $j=1,...,d$.

\par

\begin{lemma} \label {Dalpha}
Let $\Omega \subset \mathbf R^{2d}$ be a 
neighborhood of the origin, and let
$a\in C^{\infty}(\Omega)$ and $s\geq 1/2$. Then
the following statements are equivalent. 
\begin{enumerate}
\item There are positive constants $C$ and $h$ such that 
$$
|D^{\alpha}a(x,\xi)| \leq Ch^{|\alpha|}(\alpha!)^s,  
\quad (x,\xi) \in \Omega, 
$$
for the multi-index $\alpha$.

\medspace

\item There are positive constants $C$ and $h$ such that 
$$
|(P^{\alpha}\circ T ^{\beta}\circ 
\Theta ^{\gamma}\circ \Pi ^{\delta}a)(x,\xi)| 
\leq Ch^{|\alpha+\beta+\gamma+\delta|}
(\alpha! \beta! \gamma!
\delta!)^s,   \quad  (x,\xi) \in \Omega, 
$$
for every multi-indices $\alpha$, $\beta$, $\gamma$ and $\delta$.
\end{enumerate}
\end{lemma}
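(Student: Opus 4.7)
The plan is to read \eqref{eq.PT} as an invertible linear change of generators between $(x_j,\xi_j,\partial_{x_j},\partial_{\xi_j})$ and $(P_j,T_j,\Theta_j,\Pi_j)$, and to verify that the only nonvanishing commutators among the latter are the scalar relations $[P_j,\Theta_k]=i\delta_{jk}$ and $[\Pi_j,T_k]=i\delta_{jk}$, while $P_j,T_j$ commute, $\Pi_j,\Theta_j$ commute, and any two operators with different indices commute. Since $\Omega$ is a neighborhood of the origin we may additionally assume it bounded, so that $|x|,|\xi|\le R$ on $\Omega$, which lets us absorb powers of the coordinates into uniform constants.

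For $(1)\Rightarrow(2)$, I would expand $P^{\alpha}T^{\beta}\Theta^{\gamma}\Pi^{\delta}a$ via the definitions \eqref{eq.xx}. Because $x_j$ and $\partial_{\xi_j}$ commute, $P^{\alpha}T^{\beta}$ is a polynomial in $x$ and $\partial_\xi$ of combined degree at most $|\alpha+\beta|$, and analogously $\Theta^{\gamma}\Pi^{\delta}$ is a polynomial in $\xi$ and $\partial_x$. Pushing all coordinate factors to the left of all derivatives by a bounded number of commutator corrections (each correction being a scalar that lowers the operator degree by two) yields a representation
$$
P^{\alpha}T^{\beta}\Theta^{\gamma}\Pi^{\delta}a
=\sum c_{\mu,\nu,\rho,\sigma}\,x^{\mu}\xi^{\rho}D^{(\sigma,\nu)}a,
\qquad |\mu|+|\nu|+|\rho|+|\sigma|\le|\alpha+\beta+\gamma+\delta|,
$$
with coefficients of size at most $C^{|\alpha+\beta+\gamma+\delta|}$ times products of binomials. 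On $\Omega$ I would bound the coordinate factors by $R^{|\mu+\rho|}$ and the derivative by (1), and then convert $((\sigma+\nu)!)^{s}\le 2^{s|\sigma+\nu|}(\sigma!\,\nu!)^{s}$ (and analogously for the other multi-indices) to reach the product form $(\alpha!\beta!\gamma!\delta!)^{s}h_1^{|\alpha+\beta+\gamma+\delta|}$ required by (2).

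For $(2)\Rightarrow(1)$, I would invoke \eqref{eq.PT} in the form $D_{x_j}=\Pi_j+\Theta_j$ and $D_{\xi_j}=P_j+T_j$. Expanding $D^{\alpha}a$ in this way produces at most $2^{|\alpha|}$ ordered products of $P,T,\Theta,\Pi$ applied to $a$, but in the wrong order. Using the scalar commutators to normal-order each product into the canonical order gives a finite sum
$$
D^{\alpha}a=\sum_{|\mu+\nu+\rho+\sigma|\le|\alpha|}
\kappa_{\mu,\nu,\rho,\sigma}\,P^{\mu}T^{\nu}\Theta^{\rho}\Pi^{\sigma}a,
$$
with coefficients $\kappa_{\mu,\nu,\rho,\sigma}$ controlled by standard Heisenberg-algebra combinatorics (essentially iterated applications of $\Theta_{j}P_{j}=P_{j}\Theta_{j}-i$ and $T_{j}\Pi_{j}=\Pi_{j}T_{j}-i$). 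Applying (2) termwise and summing then yields the Gevrey bound in (1).

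The main obstacle in both directions is the combinatorial bookkeeping of the normal-ordering expansion: one must verify that the number of terms and the sizes of their coefficients remain bounded by $C^{|\alpha+\beta+\gamma+\delta|}$ times factors that combine cleanly with the Gevrey factorials. This is where the scalar, rather than operator-valued, nature of the commutators is essential, since each commutator correction strictly lowers the total operator degree by two and contributes only a combinatorial weight that is absorbed into the Gevrey factors by standard inequalities such as $\binom{m}{k}k!\le m!$ and $(m+n)!\le 2^{m+n}m!\,n!$.
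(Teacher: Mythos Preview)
Your proposal is correct and essentially the same as the paper's: both directions use the linear change of generators \eqref{eq.PT}, the boundedness of $\Omega$ to absorb powers of $x$ and $\xi$ into a constant $R$, and binomial expansions together with the standard factorial inequalities you list. One simplification the paper exploits in the $(2)\Rightarrow(1)$ direction is that, since $P_j,T_j$ commute and $\Pi_j,\Theta_j$ commute, the binomial expansion of $(P+T)^{\alpha_1}(\Pi+\Theta)^{\alpha_2}$ already produces terms in the canonical order $P^{\mu}T^{\nu}\Theta^{\rho}\Pi^{\sigma}$, so your normal-ordering step via the scalar commutators is not actually needed.
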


\par

\medspace

\par

\begin{lemma} \label {exchange}
Let $\alpha$, $\beta$, $P_j$, $T_j$, $\Theta_j$, and $\Pi_j$ 
be defined as before, then 
\begin{equation}\label{eq.2.1}
\begin{alignedat}{2}
P ^{\alpha} \circ T ^{\beta} &= T ^{\beta} \circ P ^{\alpha}, 
& \qquad \Pi ^{\alpha}\circ \Theta ^{\beta} &=
\Theta^{\beta}\circ \Pi^{\alpha},
\\[1ex] 
P ^{\alpha}\circ \Pi ^{\beta} &= \Pi ^{\beta}\circ P ^{\alpha}, 
& \qquad T ^{\alpha}\circ \Theta ^{\beta} &=
\Theta^{\beta}\circ T^{\alpha}.
\end{alignedat}
\end{equation}
and
\begin{align}
P ^{\alpha} \circ \Theta ^{\beta} &= 
\sum _{\alpha_0\le \alpha,\beta }i ^{\alpha _0} 
{\alpha \choose {\alpha _0}}{\beta \choose \alpha _0} \alpha_0! 
\Theta^{\beta -\alpha _0} \circ P^{\alpha -\alpha _0},
\label{eq.2.2}
\\[1ex]
\Theta ^{\alpha} \circ P ^{\beta} &=\sum _{\alpha_0\le 
\alpha,\beta }(-i) ^{\alpha _0} {\alpha \choose {\alpha _0}}
{\beta \choose \alpha _0} \alpha_0!\ P ^{\beta -\alpha _0} 
\circ \Theta ^{\alpha -\alpha _0},
\\[1ex]
T ^{\alpha} \circ \Pi ^{\beta} &= \sum _{\alpha_0\le \alpha,
\beta }(-i) ^{\alpha _0} {\alpha \choose {\alpha _0}}
{\beta \choose \alpha _0} \alpha_0!\ \Pi ^{\beta -\alpha _0} 
\circ T ^{\alpha -\alpha _0},
\\[1ex]
\Pi ^{\alpha} \circ T ^{\beta} &= \sum _{\alpha_0\le \alpha,\beta }
i ^{\alpha _0} {\alpha \choose {\alpha _0}}
{\beta \choose \alpha _0} \alpha_0! \ T ^{\beta -\alpha _0} 
\circ \Pi^{\alpha -\alpha _0}.
\end{align}
\end{lemma}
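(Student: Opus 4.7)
The plan is to reduce every identity in the lemma to the finite list of first-order commutators among the operators $P_j$, $T_k$, $\Theta_j$, $\Pi_k$, and then to obtain the multi-index formulas by induction on $|\alpha|+|\beta|$, exactly as the Leibniz identity \eqref{eq.leib} is obtained from $[D_j,x_k]=-i\delta_{jk}$. Since the commutators will turn out to be scalar multiples of the identity, the induction closes without generating any operator products outside the right-hand sides of the stated identities.

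First I would compute the relevant commutators of single operators directly from \eqref{eq.xx}, using only $[\partial_{x_j},x_k]=\delta_{jk}$, $[\partial_{\xi_j},\xi_k]=\delta_{jk}$ and the fact that the $x$- and $\xi$-variables are independent. A short expansion of each commutator into its four summands gives
\begin{equation*}
[P_j,T_k]=[\Pi_j,\Theta_k]=[P_j,\Pi_k]=[T_j,\Theta_k]=0,
\end{equation*}
together with
\begin{equation*}
[P_j,\Theta_k]=i\delta_{jk},\qquad [T_j,\Pi_k]=-i\delta_{jk},
\end{equation*}
and the opposite signs for $[\Theta_j,P_k]$ and $[\Pi_j,T_k]$ by antisymmetry. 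The first line vanishes either because every component of the two operators commutes pairwise (as for $[P_j,T_k]$ and $[\Pi_j,\Theta_k]$) or because two contributions $\pm(2i)^{-1}\delta_{jk}$ cancel; the second line is where the analogous contributions add up to $-i^{-1}\delta_{jk}=i\delta_{jk}$.

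From these relations the identities \eqref{eq.2.1} follow immediately, since commuting single operators generate commutative subalgebras. For \eqref{eq.2.2} and the three formulas that follow, I would induct on $|\alpha|+|\beta|$: peel off one factor, writing for instance $P^\alpha\Theta^\beta=P_j\cdot P^{\alpha-e_j}\Theta^\beta$, apply the inductive hypothesis to the second factor, and then push the isolated $P_j$ to the right through the remaining $\Theta$-factors using $P_j\Theta_k=\Theta_k P_j+i\delta_{jk}$. Because $[P_j,\Theta_k]$ is central, no new operator products appear, and the argument is formally identical to the derivation of \eqref{eq.leib}; the factor $(-i)^{\alpha_0}$ in that formula is replaced by $i^{\alpha_0}$ to reflect the opposite sign of the commutator, and the binomial coefficients combine via Vandermonde's identity. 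The hard part is therefore not conceptual but purely bookkeeping: matching, in each of the four non-trivial identities, the sign $i^{\alpha_0}$ versus $(-i)^{\alpha_0}$ and the left-right ordering of the surviving factors to the corresponding first-order commutator from the list above.
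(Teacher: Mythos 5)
Your proposal is correct, and all the commutators you list check out against \eqref{eq.xx}: one verifies directly that $[P_j,T_k]=[\Pi_j,\Theta_k]=0$ because the summands commute pairwise, that $[P_j,\Pi_k]=[T_j,\Theta_k]=0$ by cancellation of $\pm(2i)^{-1}\delta_{jk}$, and that $[P_j,\Theta_k]=i\delta_{jk}$, $[T_j,\Pi_k]=-i\delta_{jk}$. From there the abstract Leibniz-type induction closes because the commutators are central, and the signs $i^{\alpha_0}$ vs.\ $(-i)^{\alpha_0}$ match the four displayed identities.

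The route you take, however, is genuinely different from the paper's. The paper avoids any induction by a change-of-variables trick: it applies the partial Fourier transform $\mathscr F_1$ in the $x$-variable, observes that under $\mathscr F_1$ the operator $P_j$ becomes $(2i)^{-1}\partial_{\xi_j}-i^{-1}\partial_{\eta_j}$ and $\Theta_j$ becomes multiplication by $\tfrac12\eta_j-\xi_j$, and then, in the rotated coordinates $\sigma_j=\tfrac12\eta_j+\xi_j$, $\tau_j=\tfrac12\eta_j-\xi_j$, identifies $P_j$ with $-D_{\tau_j}$ and $\Theta_j$ with multiplication by $\tau_j$. Identity \eqref{eq.2.2} is then literally the already-stated Leibniz formula \eqref{eq.leib} applied in the $\tau$-variables, and the remaining three formulas are said to follow analogously. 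Your approach instead isolates the Heisenberg commutation relations in the $(P,T,\Theta,\Pi)$ algebra and re-derives the Leibniz identity abstractly by peeling off one factor at a time. The paper's method is shorter once \eqref{eq.leib} is granted, since it reduces each identity to a known one rather than re-running the induction; your method is more self-contained and makes the algebraic mechanism (central commutators, binomial combinatorics via Vandermonde) explicit, and it treats all four identities uniformly without having to invent a separate coordinate change for each pair. Both are sound.
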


\par

\begin{proof}
The identities \eqref{eq.2.1} follow by straight-forward computations. 
The formula \eqref{eq.2.2} follows if we prove
$$
((P ^{\alpha} \circ \Theta ^{\beta})F)(x,\xi) = 
\sum _{\alpha_0\le \alpha,\beta }
i ^{\alpha _0} {\alpha \choose {\alpha _0}}
{\beta \choose \alpha _0} \alpha_0!
((\Theta ^{\beta-\alpha_0} \circ P ^{\alpha-\alpha_0})F)(x,\xi),
$$
for every $F\in \mascS (\mathbf R^{2d})$.

\par

Let $(\mathscr F_1F)(\eta,\xi)$ be the 
partial Fourier Transform of
$F(x,\xi)$ with respect to the $x$-variable.
Then $P _j$ and $\Theta _j$ are transformed into the operators
$$
\Phi(\eta,\xi) \mapsto \left (\frac{1}{2i}
\frac{\partial}{\partial \xi_j} - \frac{1}{i}
\frac{\partial}{\partial \eta_j} \right ) \Phi(\eta,\xi)
\quad \text{and}\quad
\Phi(\eta,\xi) \mapsto 
\left (\frac{1}{2}\eta_j - \xi_j \right ) \Phi(\eta,\xi) ,
$$
respectively, and by letting 
$$
\sigma_j=\frac{1}{2}\eta_j + \xi_j , \qquad  \tau_j=
\frac{1}{2}\eta_j - \xi_j,
$$
and letting $G$ be defined by $G(\sigma_j,\tau_j) = 
(\mathscr F_1F)(\eta_j,\xi_j)$, it follows that
$P _j$ and $\Theta _j$ are transformed into the operators
$$
G\mapsto -D_{\tau _j}G\quad \text{and}
\quad G\mapsto \tau _jG,
$$
respectively. The relative \eqref{eq.2.2} is 
now a consequence of the Leibniz
rule \eqref{eq.leib}.

\par

The other statements follow by similar arguments, 
and are left for the reader. 
\end{proof}

\par

\begin{lemma} \label{PTR}
Let $a\in C^{\infty}(\Omega)$, 
where $\Omega \subset \mathbf R^{2d}$ is open, 
$s\geq 1/2$, and let $P_j$, $T_j$, $\Theta_j$ and 
$\Pi_j$ be defined as before. 
Also let $R_{\alpha, \beta, \gamma,\delta}$ 
be a composition of $P^{\alpha}$, $T^{\beta}$, 
$\Theta^{\gamma}$ and $\Pi^{\delta}$. 
Then the following statements are equivalent.
\begin{enumerate}
\item There exist positive constants $C$ and $h$ such that 
\begin{equation}\label{compositions}
|((P^{\alpha}\circ T^{\beta}\circ \Theta^{\gamma} 
\circ \Pi^{\delta})a)(x,\xi)| 
\leq C h ^{|\alpha+\beta +\gamma +\delta |}
(\alpha ! \beta !\gamma ! \delta!)^s,
\end{equation}
when $(x,\xi) \in \Omega$, for every multi-indices 
$\alpha$, $\beta$, $\gamma$ and $\delta$.

\item There exist positive constants $C$ and $h$ such that 
$$
|R_{\alpha, \beta, \gamma, \delta}a(x,\xi)| \leq 
C h^{|\alpha +\beta +\gamma +\delta |}(\alpha !
\beta !\gamma ! \delta!)^s,
$$
when $(x,\xi) \in \Omega$, for every multi-indices 
$\alpha$, $\beta$, $\gamma$ and $\delta$.

\item $a\in \maclS _s (\mathbf R^{2d}).$

\end{enumerate}
\end{lemma}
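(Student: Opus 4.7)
The strategy is to prove the chain $(2)\Rightarrow(1)\Rightarrow(3)\Rightarrow(2)$. The implication $(2)\Rightarrow(1)$ is trivial, since $P^{\alpha}\circ T^{\beta}\circ\Theta^{\gamma}\circ\Pi^{\delta}$ is itself one particular choice of $R_{\alpha,\beta,\gamma,\delta}$. The remaining two implications split into a combinatorial part (normal-ordering compositions via Lemma \ref{exchange}) and a translation part between the operators $P_j,T_j,\Theta_j,\Pi_j$ and the coordinate--derivative basis via the identities \eqref{eq.xx} and \eqref{eq.PT}.

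For $(1)\Rightarrow(2)$, I would bring any $R_{\alpha,\beta,\gamma,\delta}$ into canonical order $P\circ T\circ\Theta\circ\Pi$ by iterated use of Lemma \ref{exchange}. Since within each family the operators commute by \eqref{eq.2.1}, only the swaps $P\leftrightarrow\Theta$ and $T\leftrightarrow\Pi$ are non-trivial; each such single swap produces via \eqref{eq.2.2} a finite sum whose total order is not increased. After finitely many swaps, $R_{\alpha,\beta,\gamma,\delta}a$ becomes a finite linear combination of canonical compositions, each bounded via $(1)$. The factorial weight coming from one swap is $\binom{a}{a_0}\binom{b}{a_0}a_0!$, and using $a!/(a-a_0)!\le 2^{|a|}a_0!$ (and the analogue in $b$) one checks the key estimate
\[
\binom{a}{a_0}\binom{b}{a_0}a_0!\,\bigl((a-a_0)!(b-a_0)!\bigr)^s \le 4^{(1-s)|a+b|}(a_0!)^{1-2s}(a!\,b!)^s \le 4^{|a+b|}(a!\,b!)^s,
\]
where the last step uses $s\ge 1/2$ via $(a_0!)^{1-2s}\le 1$. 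The subsequent sum over $a_0$ is a convergent geometric series once $h$ is enlarged, so the bound in $(1)$ transfers to arbitrary orderings with only a fixed multiplicative blow-up of $h$.

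For $(3)\Rightarrow(1)$, I expand each $P_j,T_j,\Theta_j,\Pi_j$ by \eqref{eq.xx} as a fixed linear combination of a partial derivative and a coordinate multiplication, and apply Leibniz to $P^{\alpha}\circ T^{\beta}\circ\Theta^{\gamma}\circ\Pi^{\delta}a$; the result is a finite sum of terms $c_\bullet\, x^{\mu}\xi^{\nu}D_x^{\rho}D_\xi^{\sigma}a$ of the same total order, each directly controlled by the $\maclS_s$-seminorms of $a$. For $(2)\Rightarrow(3)$, I invert this via \eqref{eq.PT}: schematically,
\[
x^{\alpha_1}\xi^{\alpha_2}D_x^{\beta_1}D_\xi^{\beta_2} = 2^{-|\alpha_1+\alpha_2|}(P-T)^{\alpha_1}(\Pi-\Theta)^{\alpha_2}(\Pi+\Theta)^{\beta_1}(P+T)^{\beta_2},
\]
read componentwise and in the displayed operator order. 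Multinomial expansion of each factor yields at most $2^{|\alpha_1+\alpha_2+\beta_1+\beta_2|}$ compositions $R_{\mu,\nu,\rho,\sigma}a$ with $|\mu+\nu+\rho+\sigma|=|\alpha_1+\alpha_2+\beta_1+\beta_2|$; each is bounded by $(2)$, and the overall $2^{\bullet}$ factor is absorbed into a larger $h$, giving the $\maclS_s$-bound on $a$.

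The main obstacle is the factorial bookkeeping in the reordering step for $(1)\Rightarrow(2)$: the binomial coefficients and the factor $a_0!$ produced by each commutation must be absorbed into the Gevrey factorials $(a!b!)^s$, and this is precisely the place where the standing hypothesis $s\ge 1/2$ is indispensable, through $(a_0!)^{1-2s}\le 1$. Everything else is routine Leibniz expansion, multinomial counting, and rescaling of $h$.
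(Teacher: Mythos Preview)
Your approach is the paper's: normal-order via Lemma~\ref{exchange}, exploit $s\ge 1/2$ through $(\alpha_0!)^{1-2s}\le 1$, and pass between $(P,T,\Theta,\Pi)$ and $(x,\xi,D_x,D_\xi)$ via \eqref{eq.xx}--\eqref{eq.PT}. Two points need fixing.

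First, the declared chain $(2)\Rightarrow(1)\Rightarrow(3)\Rightarrow(2)$ does not match the implications you then argue, which are $(1)\Rightarrow(2)$, $(2)\Rightarrow(3)$ and $(3)\Rightarrow(1)$. That cycle is fine (and is precisely the paper's), but the opening sentence should be corrected.

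Second, and more substantively, in your $(2)\Rightarrow(3)$ step the binomial expansion of
\[
(P-T)^{\alpha_1}(\Pi-\Theta)^{\alpha_2}(\Pi+\Theta)^{\beta_1}(P+T)^{\beta_2}
\]
does \emph{not} produce four-block compositions $R_{\mu,\nu,\rho,\sigma}$ in the sense of the lemma: a typical term has the shape $P^{a}T^{b}\,\Pi^{c}\Theta^{d}\,P^{g}T^{h}$, and by \eqref{eq.2.1} the outer $P,T$ blocks cannot be merged across the middle $\Pi,\Theta$ block without invoking the non-trivial swaps again. So you cannot appeal to (2) as stated. The paper sidesteps this by first applying \eqref{eq.exchangorder} to commute $\xi^{\alpha_2}$ past $D_\xi^{\beta_2}$, arriving at $x^{\alpha_1}D_\xi^{\beta_2-\alpha_0}\xi^{\alpha_2-\alpha_0}D_x^{\beta_1}$; this groups $(x,D_\xi)\leftrightarrow(P,T)$ together and $(\xi,D_x)\leftrightarrow(\Pi,\Theta)$ together, so the expansion genuinely yields four-block compositions. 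You can either adopt that reordering, or observe that your $(1)\Rightarrow(2)$ argument works verbatim for arbitrary finite words in $P,T,\Theta,\Pi$ (not just four-block ones) and invoke the strengthened statement.
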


\par

\begin{proof}
We only prove the result for 
$R_{\alpha,\beta,\gamma,\delta}=
\Theta^{\gamma} \circ P^{\alpha} 
\circ T^{\beta} \circ \Pi^{\delta}$. 
The other cases follow by repeating 
these arguments, and are left for the reader. 

First assume that (1) holds, 
and choose $h \geq 1$ and $C>0$ 
such that \eqref{compositions} holds.
By Lemma \ref{exchange} it follows that
$$
R_{\alpha,\beta,\gamma,\delta} = 
\sum _{\alpha_0 \leq \alpha,\delta} 
 (-i)^{\alpha _0} {\alpha \choose \alpha_0}  
{\gamma \choose \alpha_0}
\alpha_0! P^{\alpha-\alpha_0} \circ  
T ^{\beta} \circ \Theta ^{\gamma-\alpha_0}
\circ \Pi^{\delta}.
$$
Hence \eqref{compositions} gives
\begin{multline*}
|(R_{\alpha,\beta,\gamma,\delta}a)(x,\xi)| 
\le \sum _{\alpha_0 \leq \alpha,\delta}
{\alpha \choose \alpha_0}{\gamma \choose \alpha_0}
 \alpha_0! |((P^{\alpha-\alpha_0}\circ 
T^{\beta}\circ \Theta^{\gamma-\alpha_0}
\circ \Pi^{\delta})a)(x,\xi)|
\\[1ex]
\le Ch^{|\alpha+\beta+\gamma+\delta|} 
\left(\alpha!\beta!\gamma!\delta! \right)^s \sum
_{\alpha_0 \leq \alpha,\gamma}  
{\alpha \choose \alpha_0}{\gamma \choose \alpha_0}
\alpha_0! \left(\frac{(\alpha-\alpha_0)!
(\gamma-\alpha_0)!}{\alpha! \gamma!}\right)^s .
\end{multline*}
By Cauchy Schwartz's inequality in 
combination with the fact that $s\ge 1/2$, 
it follows that the terms in the
sum on the right-hand side can be estimated as, 
\begin{multline*}
{\alpha \choose \alpha_0}{\gamma \choose \alpha_0} 
\alpha_0! \left(\frac{(\alpha-\alpha_0)!
(\gamma-\alpha_0)!}{\alpha! \gamma!}\right)^s
=
{\alpha \choose \alpha_0} ^{1-s}
{\gamma \choose \alpha_0} ^{1-s} 
(\alpha _0!)^{1-2s}
\\[1ex]
\le
 {\alpha \choose \alpha_0} ^{1-s}
{\gamma \choose \alpha_0} ^{1-s} \le \frac 12
\left (  {\alpha \choose \alpha_0} ^{2-2s} +
{\gamma \choose \alpha_0} ^{2-2s} \right )
\\[1ex]
\le
\frac 12
\left ({\alpha \choose \alpha_0}+ 
{\gamma \choose \alpha_0} \right ).
\end{multline*}

\par

A combination of these estimates give
\begin{multline*}
|(R_{\alpha,\beta,\gamma,\delta}a)(x,\xi)|
\\[1ex]
\le
2^{-1}Ch^{|\alpha+\beta+\gamma+\delta|} 
\left(\alpha!\beta!\gamma!\delta! \right)^s
\left ( \sum _{\alpha _0\le \alpha} {\alpha \choose \alpha_0}+
\sum _{\alpha _0\le \gamma}{\gamma \choose \alpha_0} 
\right )
\\[1ex]
= Ch^{|\alpha+\beta+\gamma+\delta|} 
\left(\alpha!\beta!\gamma!
\delta! \right)^s(2^{|\alpha |-1}+2^{|\gamma |-1}),
\end{multline*}
which proves statement (2).

\par

Next we prove that (2) gives (3). 
Therefore assume that (2) holds. We have 
$$
|(x^{\alpha_1}\xi^{\alpha_2}D_x^{\beta_1}
D_{\xi}^{\beta_2}a)(x,\xi)|=
|(x^{\alpha_1}\xi^{\alpha_2}
D_{\xi}^{\beta_2}D_x^{\beta_1}a)(x,\xi)|.
$$
By \eqref{eq.exchangorder} we get
\begin{multline} \label{ieq.exchangeorder}
|(x^{\alpha_1}\xi^{\alpha_2}
D_{\xi}^{\beta_2}D_x^{\beta_1}a)(x,\xi)|  
\\[1ex]
\leq \sum_{\alpha_0 \leq \alpha_2,\beta_2} 
{\alpha_2 \choose \alpha_0}
{\beta_2 \choose \alpha_0}\alpha_0!
|(x^{\alpha_1}D_{\xi}^{\beta_2-\alpha_0}
\xi^{\alpha_2-\alpha_0}D_x^{\beta_1}a)(x,\xi)|.
\end{multline}
For the last factor, using \eqref{eq.PT}, we obtain 
\begin{multline*}
|(x^{\alpha_1}D_{\xi}^{\beta_2-\alpha_0}
\xi^{\alpha_2-\alpha_0}D_x^{\beta_1}a)(x,\xi)|
\\[1ex]
=2^{-|\alpha_1+\alpha_2-\alpha_0|}|
(((P-T)^{\alpha_1} (P+T)^{\beta_2-\alpha_0}
(\Pi-\Theta)^{\alpha_2-\alpha_0}
(\Pi+\Theta)^{\beta_1})a)(x,\xi)|.
\end{multline*}
By the binomial theorem and (2) it follows
that the last term can be estimated by 
$$
Ch^{|\alpha_1+\alpha_2+\beta_1+\beta_2-2\alpha_0|}(\alpha_1!
(\alpha_2-\alpha_0)! \beta_1!(\beta_2-\alpha_0)!)^s.
$$
Inserting this into \eqref{ieq.exchangeorder} gives
\begin{multline*}
|(x^{\alpha_1}\xi^{\alpha_2}
D_{\xi}^{\beta_2}D_x^{\beta_1}a)(x,\xi)| 
\\[1ex]
\leq C\sum_{\alpha_0 \leq \alpha_2,\beta_2} 
{\alpha_2 \choose \alpha_0}
{\beta_2 \choose \alpha_0}\alpha_0!
h^{|\alpha_1+\alpha_2+\beta_1+\beta_2-2\alpha_0|}
(\alpha_1! (\alpha_2-\alpha_0)! \beta_1!(\beta_2-\alpha_0)!)^s.
\end{multline*}
Then (3) follows by similar arguments as in the first part of the proof. 

\par

Now we prove that (3) gives (1). If (3) holds, we get 
\begin{equation}\label{eq.ss}
|(x^{\alpha_1} \xi^{\alpha_2} D_x^{\beta_1} 
D_{\xi}^{\beta_2}a)(x,\xi)| 
\leq Ch^{|\alpha_1+\alpha_2+\beta_1+\beta_2|}
(\alpha_1! \alpha_2! \beta_1! \beta_2)^s.
\end{equation}
By \eqref{eq.xx} and the binomial theorem, we obtain that 
\begin{multline*}
|((P^{\alpha}\circ T^{\beta}\circ \Theta^{\gamma} 
\circ \Pi^{\delta})a)(x,\xi)| 
\\[1ex]
=\left|\left(\left(\frac{1}{2i}\partial_{\xi}+ x\right)^{\alpha} 
\left(\frac{1}{2i}\partial_{\xi} - x\right)^{\beta} 
\left(\frac{1}{2i}\partial _{x} -\xi \right)^{\gamma} 
\left(\frac{1}{2i}\partial_{x} + \xi \right)^{\delta}a\right)(x,\xi) \right|
\\[1ex]
\leq \sum_{\alpha_0, \beta_0, \gamma_0, \delta_0}
C_{\alpha_0, \beta_0, 
\gamma_0, \delta_0} |(x^{\alpha+\beta-\alpha_0-\beta_0} 
\partial_{\xi}^{\alpha_0+\beta_0} \xi^{\gamma_0+\delta_0}
\partial_x^{\gamma+\delta-\gamma_0-\delta_0}a)(x,\xi) |,
\end{multline*}
where 
$$
C_{\alpha_0, \beta_0, \gamma_0, \delta_0} =  
{\alpha \choose \alpha_0}{\beta \choose \beta_0}
{\gamma \choose \gamma_0}{\delta \choose \delta_0}.
$$
Here the sum it taken over all $\alpha_0,$ $\beta_0$, 
$\gamma_0$, and $\delta_0$ such that 
$\alpha_0 \leq \alpha$, $\beta_0 \leq \beta$, 
$\gamma_0 \leq \gamma$, and $\delta_0\leq \delta$.

\par

By \eqref{eq.leib}, \eqref{eq.ss} and similar 
arguments as in the first part of the proof we obtain 
$$
|((P^{\alpha}\circ T^{\beta}\circ 
\Theta^{\gamma} \circ \Pi^{\delta})a)(x,\xi)| 
\leq C_1 h^{|\alpha+\beta+\gamma+\delta|} 
\sum_{\alpha_0, \beta_0, \gamma_0, 
\delta_0} D_{\alpha_0, \beta_0, \gamma_0, \delta_0},
$$
where $D_{\alpha_0, \beta_0, \gamma_0, \delta_0}$ is given by 
\begin{multline*}
C_{\alpha_0, \beta_0, \gamma_0, \delta_0} 
((\alpha+\beta-\alpha_0-\beta_0)!
(\alpha_0+\beta_0)!(\gamma+\delta-\gamma_0-\delta_0)!
(\gamma_0+\delta_0)!)^s
\\[1ex] 
=((\alpha+\beta)!(\gamma+\delta)!)^s 
{\alpha \choose \alpha_0}{\beta \choose \beta_0}
{\gamma \choose \gamma_0}{\delta \choose \delta_0}
{\alpha+\beta \choose \alpha_0+
\beta_0}^{-s}{\gamma+\delta \choose \gamma_0+\delta_0}^{-s}
\\[1ex]
\leq ((\alpha+\beta)!(\gamma+\delta)!)^s 
{\alpha \choose \alpha_0}{\beta \choose \beta_0}
{\gamma \choose \gamma_0}{\delta \choose \delta_0}.
\end{multline*}
This gives 
\begin{multline*}
|((P^{\alpha}\circ T^{\beta}\circ 
\Theta^{\gamma} \circ \Pi^{\delta})a)(x,\xi)| 
\\[1ex]
\leq C_1 h^{|\alpha+\beta+\gamma+\delta|} 
\sum_{\alpha_0, \beta_0, 
\gamma_0, \delta_0}((\alpha+\beta)!(\gamma+\delta)!)^s
 C_{\alpha_0, \beta_0, \gamma_0, \delta_0}
\\[1ex]
=C_1 2^{|\alpha+\beta+\gamma+\delta|}
h^{|\alpha+\beta+\gamma+\delta|} 
((\alpha+\beta)!(\gamma+\delta)!)^s.
\end{multline*}
Since $(\alpha+\beta)!\leq 2^{|\alpha+\beta|}\alpha! \beta!$, we get 
$$
|((P^{\alpha}\circ T^{\beta}\circ \Theta^{\gamma} 
\circ \Pi^{\delta})a)(x,\xi)| \leq 
C_1 2^{(|\alpha+\beta+\gamma+\delta|)(s+1)}
h^{|\alpha+\beta+\gamma+\delta|} 
(\alpha! \beta!\gamma!\delta!)^s,
$$
and (1) follows. 
\end{proof}

\par

The next result is closely related to 
Lemma \ref{PTR}, and can be found implicitly in 
\cite{GS}. In order to be self-contained, we 
here give a proof. 

\par

\begin{lemma} \label{onevariable}
Let $\Omega \subset \mathbf R^d$ be open, 
$f \in C^{\infty}(\Omega)$, and $s\geq 1/2$. 
Then the following statements are equivalent. 

\begin{enumerate} 
\item There are positive constants 
$C$ and $h$ such that  \label{st.one}
$$
| x^{\alpha}(D^{\beta} f(x))| \leq C 
h^{|\alpha+\beta|} (\alpha! \beta!)^s, \quad x \in \Omega,
$$
for every multi-indices $\alpha$ and $\beta$.

\medspace 

\item There are positive constants 
$C$ and $h$ such that  \label{st.variable}
$$
|D^{\beta}( x^{\alpha} f(x))| \leq C 
h^{|\alpha+\beta|} (\alpha! \beta!)^s, \quad x \in \Omega,
$$
for every multi-indices $\alpha$ and $\beta$.
\end{enumerate}
\end{lemma}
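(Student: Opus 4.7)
The plan is to exploit the two Leibniz-type identities \eqref{eq.leib} and \eqref{eq.exchangorder} to convert expressions of the form $x^{\alpha}D^{\beta}f$ into expressions of the form $D^{\beta}(x^{\alpha}f)$ and vice versa, and then to bound the resulting combinatorial coefficients by precisely the same Cauchy--Schwarz trick that was used in the proof of Lemma \ref{PTR}. Both implications will then follow by the same mechanical template.

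For the implication $(1)\Rightarrow(2)$, I would assume (1) with constants $C$ and $h\geq 1$, apply \eqref{eq.leib} (with the multi-indices $\alpha$ and $\beta$ renamed) to expand
$$
D^{\beta}(x^{\alpha}f(x))=\sum_{\alpha_0\leq \alpha,\beta}(-i)^{\alpha_0}{\beta \choose \alpha_0}{\alpha \choose \alpha_0}\alpha_0!\, x^{\alpha-\alpha_0}D^{\beta-\alpha_0}f(x),
$$
estimate each term by hypothesis (1), and then note that the factors
$$
{\alpha \choose \alpha_0}{\beta \choose \alpha_0}\alpha_0!\left(\frac{(\alpha-\alpha_0)!(\beta-\alpha_0)!}{\alpha!\beta!}\right)^{s}={\alpha \choose \alpha_0}^{1-s}{\beta \choose \alpha_0}^{1-s}(\alpha_0!)^{1-2s}
$$
satisfy $(\alpha_0!)^{1-2s}\leq 1$ because $s\geq 1/2$, and
$$
{\alpha \choose \alpha_0}^{1-s}{\beta \choose \alpha_0}^{1-s}\leq \frac{1}{2}\left({\alpha \choose \alpha_0}^{2-2s}+{\beta \choose \alpha_0}^{2-2s}\right)\leq \frac{1}{2}\left({\alpha \choose \alpha_0}+{\beta \choose \alpha_0}\right),
$$
again because $s\geq 1/2$. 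Summing over $\alpha_0$ gives at most $2^{|\alpha|-1}+2^{|\beta|-1}\leq 2^{|\alpha+\beta|}$, which is absorbed into a new constant $h$, proving (2).

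For the implication $(2)\Rightarrow(1)$, I would use the dual expansion \eqref{eq.exchangorder} to write
$$
x^{\alpha}D^{\beta}f(x)=\sum_{\alpha_0\leq \alpha,\beta}(-1)^{\alpha_0}{\alpha \choose \alpha_0}{\beta \choose \alpha_0}\alpha_0!\, D^{\beta-\alpha_0}(x^{\alpha-\alpha_0}f(x)),
$$
apply (2) to each summand, and repeat the same combinatorial estimate. The conclusion (1) follows in the same way, with adjusted constants $C$ and $h$.

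The main (and only) obstacle is the bookkeeping of the combinatorial factors, and this reduces to the observation that $s\geq 1/2$ is precisely what makes both $(\alpha_0!)^{1-2s}\leq 1$ and $\binom{\cdot}{\cdot}^{2-2s}\leq \binom{\cdot}{\cdot}$ valid at once. Since this combinatorial lemma is already spelled out in the proof of Lemma \ref{PTR}, the remainder of the argument is essentially a routine transcription, and the details can reasonably be left to the reader.
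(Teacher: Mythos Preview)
Your proof is correct and follows essentially the same approach as the paper's own proof: the paper also applies the Leibniz rule \eqref{eq.leib} for $(1)\Rightarrow(2)$ and the identity \eqref{eq.exchangorder} for $(2)\Rightarrow(1)$, each time referring back to the combinatorial estimate from the proof of Lemma~\ref{PTR} for the bound $2^{|\alpha|-1}+2^{|\beta|-1}$. The only difference is that you spell out the Cauchy--Schwarz step explicitly where the paper simply cites the earlier argument.
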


\par

\begin{proof}
Assume that statement (1) holds. 
By Leibniz rule applied to $D^{\beta}( x^{\alpha} f(x))$ we get
$$
|D^{\beta}( x^{\alpha} f(x))| \leq C 
\sum_{\gamma \leq \alpha, \beta}{\alpha \choose \gamma}
{\beta \choose \gamma}\gamma! 
\left((\alpha-\gamma)!(\beta-\gamma)!\right)^s 
h^{|\alpha+\beta-2\gamma|}
$$
for some constant $C$ which is 
independent of $\alpha$ and $\beta$.
By \eqref{eq.leib}, it now follows by the same argument as 
in the proof of Lemma \ref{PTR} that 

$$
|D^{\beta}( x^{\alpha} f(x))| \leq C  
h^{|\alpha+\beta|} (\alpha! \beta!)^s(2^{|\beta|-1}+2^{|\alpha|-1}),
$$
and the statement (2) follows for some $h\geq1$.

\medspace

Assume instead that (2) holds. 
By \eqref{eq.exchangorder},
then statement (1) follows by similar 
arguments as in the first part of the proof.
\end{proof}

\par

The previous lemma can easily be extended 
to more than one variables. 
The proof is similar to the proof of 
Lemma \ref{PTR} and \ref{onevariable} , and is left fot the reader. 

\begin{lemma} \label{xDR}
Let $R_{\alpha,\beta,\gamma,\delta}$ be 
a composition of the multiplication 
operators $x^{\alpha}$, $\xi^{\beta}$, 
$\partial_{x}^{\gamma}$, and $\partial_{\xi}^{\delta}$. 
Then the following conditions are equivalent. 
\begin{enumerate}
\item There are positive constants $C$ and $h$ such that 
$$
|x^{\alpha}\xi^{\beta}\partial_{x}^{\gamma}
\partial_{\xi}^{\delta}a(x,\xi) | \leq C
h^{|\alpha +\beta +\gamma +\delta |}
(\alpha ! \beta ! \gamma!\delta !)^s, \quad (x,\xi) \in \Omega,
$$
for every multi-indices $\alpha$, $\beta$, $\gamma$ and $\delta$.
\item There are positive constants $C$ and $h$ such that 
$$
R_{\alpha,\beta,\gamma,\delta} \le C h^{|\alpha +\beta +\gamma +\delta |}
(\alpha ! \beta ! \gamma!\delta !)^s,
\quad (x,\xi) \in \Omega,
$$
for every multi-indices $\alpha$, $\beta$, $\gamma$ and $\delta$.
\end{enumerate}
In particular, $a\in \mathcal S_s(\mathbf R^{2d})$ if and only if {\rm{(2)}} holds. 
\end{lemma}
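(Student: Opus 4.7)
The plan is to mimic the strategy already used in Lemma \ref{PTR} and Lemma \ref{onevariable}, treating $R_{\alpha,\beta,\gamma,\delta}$ as a ``disordered'' version of the canonical composition $x^{\alpha}\xi^{\beta}\partial_x^{\gamma}\partial_\xi^{\delta}$ and controlling the discrepancy through Leibniz-type commutation formulas. The direction (1)$\Rightarrow$(2) is the substantial one; (2)$\Rightarrow$(1) is immediate since the canonical ordering is itself a particular instance of $R_{\alpha,\beta,\gamma,\delta}$.

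For the nontrivial direction, I would first exploit the fact that the only nontrivial commutators among the generators $x_j$, $\xi_j$, $\partial_{x_j}$, $\partial_{\xi_j}$ are $[\partial_{x_j},x_j]$ and $[\partial_{\xi_j},\xi_j]$; the $x$-variables and $\xi$-variables decouple completely in the reordering. Hence an arbitrary composition $R_{\alpha,\beta,\gamma,\delta}$ can be rewritten, by iterated application of \eqref{eq.leib}, as a finite sum
\begin{equation*}
R_{\alpha,\beta,\gamma,\delta} = \sum_{\gamma_0\le \alpha,\gamma}\ \sum_{\delta_0\le \beta,\delta} c_{\gamma_0,\delta_0}\,
x^{\alpha-\gamma_0}\xi^{\beta-\delta_0}\partial_x^{\gamma-\gamma_0}\partial_\xi^{\delta-\delta_0},
\end{equation*}
where the coefficients have the shape
\begin{equation*}
c_{\gamma_0,\delta_0} = (\pm i)^{|\gamma_0+\delta_0|}\binom{\alpha}{\gamma_0}\binom{\gamma}{\gamma_0}\binom{\beta}{\delta_0}\binom{\delta}{\delta_0}\gamma_0!\,\delta_0!\,.
\end{equation*}
Assuming (1), I then estimate each term of the sum and factor out $(\alpha!\beta!\gamma!\delta!)^{s}h^{|\alpha+\beta+\gamma+\delta|}$, so that controlling (2) reduces to bounding expressions of the form
\begin{equation*}
\binom{\alpha}{\gamma_0}\binom{\gamma}{\gamma_0}\gamma_0!\left(\frac{(\alpha-\gamma_0)!(\gamma-\gamma_0)!}{\alpha!\gamma!}\right)^{s}
\end{equation*}
together with their $\beta,\delta,\delta_0$ analogues.

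The key combinatorial step is then exactly the Cauchy--Schwarz trick already used in Lemma \ref{PTR}: rewriting the above as $\binom{\alpha}{\gamma_0}^{1-s}\binom{\gamma}{\gamma_0}^{1-s}(\gamma_0!)^{1-2s}$ and using $s\ge 1/2$ to bound it by $\tfrac{1}{2}\bigl(\binom{\alpha}{\gamma_0}+\binom{\gamma}{\gamma_0}\bigr)$, so that summation over $\gamma_0$ contributes only a factor $2^{|\alpha|}+2^{|\gamma|}$, and analogously $2^{|\beta|}+2^{|\delta|}$ from the $\delta_0$ sum. These powers of $2$ are then absorbed into a new constant $h$, yielding (2). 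The final ``in particular'' statement is immediate: taking $\Omega=\mathbf R^{2d}$, condition (1) is exactly the defining seminorm estimate for $\maclS_s(\mathbf R^{2d})$, so the equivalence (1)$\Leftrightarrow$(2) gives the claimed characterization.

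The main obstacle I anticipate is purely bookkeeping: keeping track of how many commutations are needed to pass from an arbitrary $R_{\alpha,\beta,\gamma,\delta}$ to the canonical ordering without double-counting, and verifying that at each swap the only factorial/binomial inflation that appears is of the Leibniz form already tamed above. Since the $x$-sector and $\xi$-sector reorderings are independent, this bookkeeping factorizes into two copies of the one-sector argument from Lemma \ref{onevariable}, and no new ideas should be needed beyond those; for this reason I would leave the routine enumeration of cases to the reader, as is done for Lemma \ref{PTR}.
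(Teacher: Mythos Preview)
Your proposal is correct and is precisely the approach the paper intends: the paper's own ``proof'' of this lemma consists of a single sentence stating that the result follows by the same arguments as in Lemmas \ref{PTR} and \ref{onevariable}, with details left to the reader. Your write-up actually supplies more detail than the paper does, correctly isolating the decoupling of the $x$- and $\xi$-sectors, the Leibniz-type reordering, and the Cauchy--Schwarz/binomial estimate from Lemma \ref{PTR} as the key ingredients.
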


\par

The lemma follows by similar arguments as 
in the proofs of Lemmas \ref{PTR} and 
\ref{onevariable}. The details are left for the reader.

\par 

\medspace

\par 

\begin{proof} [Proof of Lemma \ref{Dalpha}]
Assume that (1) follows, and choose $R=\max(|x|,|\xi|,1)$. Then 
$$
|((P^{\alpha} \circ T^{\beta} \circ \Theta^{\gamma} 
\circ \Pi^{\delta})a)(x,\xi)|
\leq \sum {\alpha \choose \alpha_0}
{\beta \choose \beta_0}{\gamma \choose \gamma_0}
{\delta \choose \delta_0}
Q_{\alpha_0,\beta_0,\gamma_0,\delta_0}(x,\xi),
$$
where 
$$
Q_{\alpha_0,\beta_0,\gamma_0,\delta_0}(x,\xi)=
R^{|\alpha-\alpha_0|+|\beta-\beta_0|+
|\gamma-\gamma_0|+|\delta-\delta_0|} 
|\partial_{\xi}^{\alpha_0}\partial_{\xi}^{\beta_0}
\partial_x^{\gamma_0}\partial_x^{\delta_0}a)(x,\xi)|,
$$
and the sum is taken over all $\alpha_0 \leq \alpha$, 
$\beta_0 \leq \beta$, $\gamma_0 \leq 
\gamma$ and $\delta_0 \leq \delta$. By (1) we have
$$
Q_{\alpha_0,\beta_0,\gamma_0,\delta_0}
(x,\xi)\leq CR^{|\alpha-\alpha_0|+|\beta-\beta_0|+
|\gamma-\gamma_0|+|\delta-\delta_0|} 
h^{|\alpha_0+\beta_0+\gamma_0+\delta_0|}
(\alpha! \beta! \gamma! \delta!)^s.
$$
Since $s\geq 1/2$, by inserting this 
into $Q$, it follows from binomial theorem that 
$$
|((P^{\alpha} \circ T^{\beta} \circ \Theta^{\gamma} 
\circ \Pi^{\delta})a)(x,\xi)|\leq 
C(R+h)^{|\alpha+\beta+\gamma+
\delta|}(\alpha! \beta! \gamma! \delta!)^s.
$$
This gives (2).

If instead (2) holds, then for some 
$\alpha_1$ and $\alpha_2$, \eqref{eq.PT} gives
\begin{multline*}
|D^{\alpha}a(x,\xi)|=|D_{\xi}^{\alpha_1}D_{x}^{\alpha_2}a(x,\xi)|
\\[1ex]
\leq |((P+T)^{\alpha_1}(\Pi+\Theta)^{\alpha_2}a)(x,\xi)|
\\[1ex]
\leq \sum {\alpha_1 \choose \alpha_0}{\alpha_2 \choose \gamma_0}|(P^{\alpha_0} 
\circ T^{\alpha_1-\alpha_0}\circ
\Theta^{\gamma_0}\circ \Pi^{\alpha_2-\gamma_0})a(x,\xi) |.
\end{multline*}
By similar arguments as in the first 
part of the proof as well as in earliar proofs, 
we obtain that the right hand side can be estimated by 
$$
Ch^{|\alpha_1+\alpha_2|}(\alpha_1 ! \alpha_2 !)^s.
$$
This gives the result. 
\end{proof}

\medspace

\par

The next lemma is the last step in the proof of Theorem \ref{maintheorem}.

\par 

\begin{lemma}\label{originSs}
Let $s\geq 1/2$ and $a \in C_+(\mathbf R^{2d})
\cap C^{\prime}(\mathbf R^{2d})$ be such that 
\begin{equation}\label{ineorigin}
|(\partial_x^{\alpha} \partial_{\xi}^{\beta}a)(0,0)|\le 
C h^{|\alpha + \beta|} 
(\alpha! \beta!)^s,
\end{equation}
where $\alpha, \beta
\in \mathbf Z_+^d$, then $a \in \maclS_s(\mathbf R^{2d})$.
\end{lemma}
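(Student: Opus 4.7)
The plan is to verify the pointwise Gelfand--Shilov characterisation of Lemma \ref{xDR}: it suffices to establish that, for some $h'>0$,
\begin{equation*}
\sup_{(x,\xi)\in \mathbf R^{2d}} |x^{\alpha_1}\xi^{\alpha_2}\partial_x^{\beta_1}\partial_\xi^{\beta_2}a(x,\xi)|
\le C (h')^{|\alpha+\beta|}(\alpha_1!\alpha_2!\beta_1!\beta_2!)^s
\end{equation*}
for all multi-indices, from which $a\in \maclS_s(\mathbf R^{2d})$ follows. The task is therefore to propagate the purely local Gevrey bound \eqref{ineorigin} at the origin to a uniform estimate on all of $\mathbf R^{2d}$, using only the positivity hypothesis $a\in C_+$. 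As a preliminary step, the case $\alpha=\beta=0$ in \eqref{ineorigin} together with Proposition \ref{DCS}(4) gives $a\in \mascS(\mathbf R^{2d})$, so that $K := Aa$ is a well-defined, positive, trace-class operator on $L^2(\mathbf R^d)$ with $\operatorname{tr}(K) = (\pi/2)^{d/2} a(0)$ (consistent with Proposition \ref{trace}).

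The core of the argument is a trace-formula representation. Under the Weyl--Wigner correspondence, $a(X) = c_0\,\operatorname{tr}(K\,U_X)$, where $U_X$ denotes the unitary Heisenberg phase-space translation at $X=(x,\xi)$. Multiplication of the symbol $a$ by $x_j$ or $\xi_j$ corresponds, via the Moyal product, to the anti-commutator of $\op^{w}(a)$ with the position or momentum operator on $L^2(\mathbf R^d)$, and partial differentiation of $a$ in $x_j$ or $\xi_j$ corresponds to the commutator of $\op^{w}(a)$ with the momentum or position operator. Iterating these identities yields
\begin{equation*}
(x^{\alpha_1}\xi^{\alpha_2}\partial_x^{\beta_1}\partial_\xi^{\beta_2}a)(X) = c_0\,\operatorname{tr}(Q_{\alpha,\beta}\,K\,U_X),
\end{equation*}
where $Q_{\alpha,\beta}$ is a linear combination, with coefficients bounded by $2^{|\alpha+\beta|}$, of monomials of total degree $|\alpha+\beta|$ in the position and momentum operators on $L^2(\mathbf R^d)$.

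Writing $K = K^{1/2}K^{1/2}$ and applying Cauchy--Schwarz in the Hilbert--Schmidt pairing, together with cyclicity of the trace and unitarity of $U_X$, gives the uniform-in-$X$ bound
\begin{equation*}
|\operatorname{tr}(Q_{\alpha,\beta}\,K\,U_X)|
\le \|K^{1/2}\|_{\mathrm{HS}} \cdot \|Q_{\alpha,\beta}\,K^{1/2}\|_{\mathrm{HS}}
= \sqrt{\operatorname{tr}(K)} \cdot \sqrt{\operatorname{tr}(K\,Q_{\alpha,\beta}^{*}Q_{\alpha,\beta})}.
\end{equation*}
The first factor is controlled by $a(0)$ and hence by \eqref{ineorigin}. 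For the second factor, $Q_{\alpha,\beta}^{*}Q_{\alpha,\beta}$ corresponds under the inverse Weyl correspondence to a partial derivative of $a$ at the origin of total order $2|\alpha+\beta|$, so that \eqref{ineorigin} combined with the trivial estimate $(2\alpha)!\le 4^{|\alpha|}(\alpha!)^2$ dominates this trace by $C(4^{s}h)^{2|\alpha+\beta|}(\alpha_1!\alpha_2!\beta_1!\beta_2!)^{2s}$. Taking square roots and combining delivers the desired uniform Gelfand--Shilov estimate, and Lemma \ref{xDR} completes the proof.

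The main obstacle is the second step: producing the explicit polynomial operators $Q_{\alpha,\beta}$ with controlled coefficients from the iterated Moyal-calculus expansion, and identifying $\operatorname{tr}(K\,Q_{\alpha,\beta}^{*}Q_{\alpha,\beta})$ with a specific derivative of $a$ at the origin governed by \eqref{ineorigin}. This combinatorial bookkeeping determines the dependence $h' = h'(h)$, and is the only place where the restriction $s\ge 1/2$ is crucial: the factorisation $((2\alpha)!)^s \le 4^{s|\alpha|}(\alpha!)^{2s}$ ensures that the squaring incurred by the Cauchy--Schwarz step does not degrade the Gevrey growth.
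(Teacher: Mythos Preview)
Your overall strategy coincides with the paper's: exploit positivity and the trace identity $a(0)=c\,\operatorname{tr}(Aa)$, then propagate the estimate at the origin to all $X$ via a Cauchy--Schwarz argument. The paper executes this by taking the spectral decomposition $Aa=\sum_j f_j\otimes\overline{f_j}$ and using the symbol-side operators $P,T,\Theta,\Pi$ of \eqref{eq.xx}, which are designed so that $A(P^\alpha T^\beta \Theta^\gamma \Pi^\delta a)=\sum_j (x^\alpha D^\gamma f_j)\otimes\overline{(x^\beta D^\delta f_j)}$. Cauchy--Schwarz in~$j$ then reduces the trace norm of the general case to the diagonal case $(\alpha,\gamma)=(\beta,\delta)$, whose value at the origin is controlled by Lemma~\ref{Dalpha}; Proposition~\ref{trace} and Lemma~\ref{PTR} close the loop. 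Your use of $K^{1/2}$ in place of the spectral sum is an equivalent repackaging of the same idea.

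There is, however, a concrete error in your reduction. Iterating commutators and anti-commutators does \emph{not} produce $c_0\,\operatorname{tr}(Q_{\alpha,\beta}\,K\,U_X)$ with a single $X$-independent operator on one side of $K$: each step acts on $K$ from both sides, so you obtain a sum $\sum_m c_m\,\operatorname{tr}(L_m K R_m U_X)$ with monomials $L_m,R_m$ satisfying $\deg L_m+\deg R_m=|\alpha+\beta|$. (Pushing $L_m$ past $U_X$ introduces polynomial $X$-dependence, which would destroy the uniform bound.) The fix is immediate and in fact recovers the paper's argument: apply Cauchy--Schwarz termwise, $|\operatorname{tr}(L_mKR_mU_X)|\le\|L_mK^{1/2}\|_{\mathrm{HS}}\,\|K^{1/2}R_m\|_{\mathrm{HS}}$, and note that each squared factor equals $\operatorname{tr}(L_m^*L_m K)$ or $\operatorname{tr}(R_mR_m^*K)$, i.e.\ a linear combination of derivatives of $a$ at the origin of order at most $2|\alpha+\beta|$, hence bounded by \eqref{ineorigin}. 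The paper's $P,T,\Theta,\Pi$ calculus (Lemmas~\ref{Dalpha} and~\ref{PTR}) is precisely the clean way to carry out the ``combinatorial bookkeeping'' you flag as the main obstacle, and the two-sided structure there (left factors $P,\Theta$ versus right factors $T,\Pi$) mirrors the $L_m,R_m$ split above.
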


\par

\begin{proof}
Since $\maclS_s$ is dense in $\mascS$, 
it follows from Theorem 3.3 in \cite{TJ} that $
Aa$ is a positive semi-definite trace-class 
operator on $L^2(\mathbf R^{2d})$. 
In particular, 
$$
(Aa)(x,y)=\sum _jf_j(x)\overline{f_j(y)},
$$
where $(f_j,f_k)=0$ when $j\neq k$, 
and the trace-norm of $Aa$ is given by
\begin{equation}\label{eq.trace}
\|Aa\|_{\operatorname{Tr}}=\sum \|f_j\|_{L^2}^2=
(\pi/2)^{d/2}a(0,0) < \infty.
\end{equation}
More specific, by Theorem 3.13 in \cite{TJ}, it follows that $a\in
\mascS(\mathbf R^{2d})$, and that
$$
\sum  _j \|x^{\alpha}D^{\gamma}f_j\|_{L^2}^2< \infty,
$$
for every multi-indices $\alpha$ and $\gamma$.
Now let $a_{\alpha,\gamma}=P^{\alpha}\circ T^{\alpha} \circ 
\Theta^{\gamma}\circ \Pi^{\gamma}a$. 
Then
$$
Aa_{\alpha,\gamma}=\sum (x^{\alpha}D^{\gamma}f_j)\otimes
(\overline{x^{\alpha}D^{\gamma}f_j}).
$$ 
Furthermore, since $a_{\alpha,\gamma} \in C_+(\mathbf R^{2d})$,
Lemma \ref{Dalpha} gives 
$$
|a_{\alpha,\gamma}(x,\xi)| \leq a_{\alpha,\gamma}(0,0) \leq C
h^{2|\alpha +\gamma|}(\alpha!\gamma!)^{2s},
$$
where $C$ and $h$ are independent of $\alpha$ and $\gamma$.
A combination of these relations and \eqref{eq.trace} give
$$
\|Aa_{\alpha,\gamma}\|_{\operatorname{Tr}}=\sum _j
\|x^{\alpha}D^{\gamma}f_j\|_{L^2}^2
=(\pi/2)^{d/2}a_{\alpha,\gamma}(0,0)
\leq Ch^{|2\alpha +2\gamma|}(\alpha!\gamma!)^{2s},
$$
for some constants $C$ and $h$.

\par

Next let $a_{\alpha,\beta,\gamma,\delta}=
P^{\alpha}\circ T^{\beta} \circ 
\Theta^{\gamma}\circ \Pi^{\delta}a$. 
Then $Aa_{\alpha,\beta,\gamma,\delta}$ is 
a linear combination of terms of the type $\sum
(x^{\alpha}D^{\gamma}f_j)\otimes(\overline{x ^{\beta}D^{\delta}f_j}) $.
By applying Cauchy Schwartz inequality we get
\begin{multline*}
\| Aa_{\alpha,\beta,\gamma,\delta} \|_{\operatorname{Tr}} 
\leq \sum  _j
\|(x^{\alpha}D^{\gamma}f_j)\otimes(\overline{x ^{\beta}
D^{\delta}f_j})\|_{\operatorname{Tr}} 
\\[1ex]
= \sum _j
\|(x^{\alpha}D^{\gamma}f_j)\otimes
(\overline{x ^{\beta}D^{\delta}f_j})\|_{L^2}
\\[1ex]
= \sum _j \|x^{\alpha}D^{\gamma}f_j\|_{L^2}
\|x ^{\beta}D^{\delta}f_j\|_{L^2}
\\[1ex]
\le  \left(\sum _j\|x^{\alpha}D^{\gamma}f_j\|_{L^2}^2\right)^{1/2}
\left(\sum _j \|x ^{\beta}D^{\delta}f_j\|_{L^2}^2\right)^{1/2}
\\[1ex]
\le Ch^{|\alpha+\beta+\gamma+\delta|}
(\alpha !\beta ! \gamma ! \delta !)^s,
\end{multline*}
for some constants $C$ and $h$. 
In the first inequality we have used 
the fact that $(x^{\alpha}D^{\gamma}f_j)
\otimes(\overline{x ^{\beta}D^{\delta}f_j})$
is an operator of rank one. By Proposition \ref{trace}, we get
$$
\|a_{\alpha,\beta,\gamma,\delta}\|_{L^{\infty}} \leq  
Ch^{|\alpha+\beta+\gamma+\delta|}(\alpha !\beta ! \gamma ! \delta !)^s,
$$
which implies that $a\in \mathcal S_s(\mathbf R^{2d})$. 
The proof is complete.
\end{proof}

\par

\begin{proof}[Proof of Theorem \ref{maintheorem}]
By theorem 3.13 in \cite{TJ} it follows that
$a\in \mascS^{\prime}(\mathbf R^{2d})$.
since \eqref{eq.main} implies \eqref{ineorigin},
it follows from Lemma \ref{originSs} 
that $a\in \maclS_s(\mathbf
R^{2d})$, and the result follows. 
\end{proof}

\par

\par


\begin{thebibliography}{9}
\bibitem{CCK} J. Chung, S.-Y. Chung, D. Kim,
\emph{Characterizations of the Gelfand
-Shilov spaces via Fourier Transforms}, 
Proc. of the Am. Math. Soc. $\mathbf{124}$ 7 (1996): 2101-2108.
\bibitem{GS} I. M. Gel'fand, G. E. Shilov, \emph{Generalized functions}, Vol.2,
Acdemic press, Boston, 1968.
\bibitem{KP} S. G. Krantz, H. R. Parks \emph
{A Primer of Real Analytic Functions}, Birkh$\ddot{\text {a}}$user, Boston, 2002.
\bibitem{KH} H. Komatsu, \emph{Ultradistributions, I. Structure theorems and a 
characterization}, J. Fac. Sci. Univ. Tokyo Sect. IA Math. $\mathbf {20}$ 1 (1973),
25-105.
\bibitem{CP} Z. Lozanov- Crvenkovic, D. Perisic, \emph
{Kernel theorems for the spaces of tempered ultradistributions}, 
Novi Sad. $\mathbf{18}$ 10 (2007): 699-713.
\bibitem{RL}L. Rodino, \emph{Linear Partial differential operators in Gevrey spaces},
World Sci, 1993.
\bibitem{FT} F. Treves, \emph{Topological vector spaces, distributions and kernels},
Academic Press, New York and London, 1967. 
\bibitem{TJ} J. Toft,\emph{ Positivity properties in noncommutative convolution 
algebras with applications in pseudo-differential calculus},
Bull. Sci. math. $\mathbf{127}$(2003):101-132.
\bibitem{RW} W. Rudin,\emph{Division in algebras of infinitely differentiable functions},
Journal of Mathematics and Mechanics.
$\mathbf{11}$(1962): 797-810.

\end{thebibliography}
\end{document}